\def\NZQ{\Bbb}               
\def\NN{{\NZQ N}}
\def\ZZ{{\NZQ Z}}
\def\CC{{\NZQ C}}
\def\PP{{\NZQ P}}
\def\frk{\frak}               
\def\Phi{{\frk n}}
\def\Phi{{\frk N}}
\def\opn#1#2{\def#1{\operatorname{#2}}} 
\opn\chara{char} \opn\length{\ell} \opn\pd{pd} \opn\rk{rk}
\opn\projdim{proj\,dim} \opn\injdim{inj\,dim} \opn\rank{rank}
\opn\depth{depth} \opn\sdepth{sdepth} \opn\fdepth{fdepth}
\opn\grade{grade} \opn\height{height} \opn\embdim{emb\,dim}
\opn\codim{codim}  \opn\min{min} \opn\max{max}
\opn\Tr{Tr} \opn\bigrank{big\,rank}
\opn\superheight{superheight}\opn\lcm{lcm}
\opn\trdeg{tr\,deg}
\opn\reg{reg} \opn\lreg{lreg} \opn\ini{in} \opn\lpd{lpd}
\opn\size{size}
\opn\div{div} \opn\Div{Div} \opn\cl{cl} \opn\Cl{Cl}
\opn\Spec{Spec} \opn\Supp{Supp} \opn\supp{supp} \opn\Sing{Sing}
\opn\Ass{Ass} \opn\Min{Min}
\opn\Ann{Ann} \opn\Rad{Rad} \opn\Soc{Soc}
\opn\Im{Im} \opn\Ker{Ker} \opn\Coker{Coker} \opn\Am{Am}
\opn\Hom{Hom} \opn\Tor{Tor} \opn\Ext{Ext} \opn\End{End}
\opn\Aut{Aut} \opn\id{id}  \opn\deg{deg}
\opn\nat{nat}
\opn\pff{pf}
\opn\Pf{Pf} \opn\GL{GL} \opn\SL{SL} \opn\mod{mod} \opn\ord{ord}
\opn\Gin{Gin} \opn\Hilb{Hilb}
\opn\aff{aff} \opn\con{conv} \opn\relint{relint} \opn\st{st}
\opn\lk{lk} \opn\cn{cn} \opn\core{core} \opn\vol{vol}
\opn\link{link} \opn\star{star}
\opn\gr{gr}
\def\pot#1#2{#1[\kern-0.28ex[#2]\kern-0.28ex]}
\opn\dirlim{\underrightarrow{\lim}}
\opn\inivlim{\underleftarrow{\lim}}
\let\to=\rightarrow
\def\Implies{\ifmmode\Longrightarrow \else
        \unskip${}\Longrightarrow{}$\ignorespaces\fi}
\def\implies{\ifmmode\Rightarrow \else
        \unskip${}\Rightarrow{}$\ignorespaces\fi}
\def\iff{\ifmmode\Longleftrightarrow \else
        \unskip${}\Longleftrightarrow{}$\ignorespaces\fi}
\newtheorem{Theorem}{Theorem}[section]
\newtheorem{Lemma}[Theorem]{Lemma}
\newtheorem{Corollary}[Theorem]{Corollary}
\newtheorem{Proposition}[Theorem]{Proposition}
\newtheorem{Remark}[Theorem]{Remark}
\newtheorem{Example}[Theorem]{Example}
\newtheorem{Conjecture}[Theorem]{Conjecture}
\let\epsilon\varepsilon
\let\phi=\varphi
\let\kappa=\varkappa
\def\qed{\ifhmode\textqed\fi
      \ifmmode\ifinner\quad\qedsymbol\else\dispqed\fi\fi}
\def\textqed{\unskip\nobreak\penalty50
       \hskip2em\hbox{}\nobreak\hfil\qedsymbol
       \parfillskip=0pt \finalhyphendemerits=0}
\def\dispqed{\rlap{\qquad\qedsymbol}}
\opn\dis{dis}
\def\pnt{{\raise0.5mm\hbox{\large\bf.}}}
\opn\Lex{Lex}
\begin{document}

\title{\bf Hilbert series and Lefschetz properties of dimension one almost complete intersections}

\author{Alexandru Dimca and  Dorin Popescu}

\thanks{The  support of both authors from Institut Universitaire de France and the support of the second author  from  grant ID-PCE-2011-1023 of Romanian Ministry of Education, Research and Innovation are gratefully acknowledged.}

\address{Alexandru Dimca,  Univ. Nice Sophia Antipolis, CNRS,  LJAD, UMR 7351, 06100 Nice, France.}
\email{dimca@unice.fr}

\address{Dorin Popescu,  Simion Stoilow Institute of Mathematics of Romanian Academy, Research unit 5,
 P.O.Box 1-764, Bucharest 014700, Romania}
\email{dorin.popescu@imar.ro}

\maketitle
\begin{abstract} We generalize some properties related to Hilbert series and Lefschetz properties of Milnor algebras of projective hypersurfaces with isolated singularities to the more general case of an almost complete intersection ideal $J$ of dimension one. When the saturation $I$ of $J$ is a complete intersection, we get explicit formulas for a number of related invariants. New examples of hypersurfaces $V:f=0$ in $\PP^n$ whose Jacobian ideal $J_f$ satisfies this property  and with explicit nontrivial Alexander polynomials  are given in any dimension. A Lefschetz type property for the graded quotient $I/J$ is proved for $n=2$ and a counterexample due to A. Conca is given for such a property when $n=3$. Two conjectures are also stated in the paper.\\

\noindent{\it Key words} : Cayley-Bacharach theory,   almost complete intersections,  projective hypersurfaces, isolated singularities.\\

\noindent {\it 2010 Mathematics Subject Classification: Primary 13D40, Secondary 14B05, 14C20, 13D02.}
\end{abstract}

\section*{Introduction}

Let $S=K[x_0,\ldots,x_n]$ be the polynomial algebra over a characteristic zero field $K$, ${\bf f}=\{f_0,\ldots,f_n\}$ be a system of $n+1$ homogeneous polynomials of $S$ with $\deg f_i=d_i$ and $J=({\bf f})$ the ideal in $S$ spanned by these polynomials. Suppose that $\dim S/J=1$ and $f_1,\ldots,f_n$ is a regular sequence in $S$. 

Let $V\subset {\PP}^n$ be a projective hypersurface defined by a homogeneous polynomial $f\in S$ of degree $d$. When ${\bf f}=\{f_0,\ldots,f_n\}$ is the set of its partial derivatives,  $J_f=(f_0,\ldots,f_n)$ is called the Jacobian ideal of $f$  and  $M(f)=S/J_f$ is the Milnor (or Jacobian) algebra of $f$.
Assume that $V$ has only isolated singularities, i.e.  $\dim S/J_f=1$. Then, maybe after a coordinate change, we can assume that $\{f_1,\ldots f_n\}$ is a regular sequence, i.e. this geometric setting provides the main example of the algebraic situation described above, with $d_i=d-1$ for all $i=0,...,n$.

The Hilbert series of the Milnor algebra $M(f)$ and a number of related invariants have been studied in \cite{CD} and a number of recent papers, see \cite{D}, \cite{DS}, \cite{DSt},  \cite{EM}, \cite{Se}, \cite{SW}.
In this paper we extend some of the results obtained for the Milnor algebra to the more general case of an almost complete intersection of dimension one $S/J$ as defined above. In some cases, this leads to simplifications of the proofs given in the special case of Milnor algebras.
As an example, compare the proof of Theorem \ref{main} to that of Proposition \ref{CIcase} given in \cite{CD} or consider Remark \ref{rk2}.
In addition, we give new evidence for a conjecture stated already in \cite{D}, see Conjecture \ref{C1} and Theorem \ref{comput}, (3).
When the saturation of $J$ is a complete intersection, we get explicit formulas for a number of related invariants, see Theorem \ref{comput}. New examples of hypersurfaces $V:f=0$ whose Jacobian ideal $J_f$ satisfies this property  and with explicit nontrivial Alexander polynomials  are given in any dimension, see Example \ref{E1}.

For a generic linear form $l \in S_1$, the structure of $S/J$ as a $K[l]$-module is closely related to various Lefschetz type properties, a subject investigated in the second and the forth sections, see for instance Theorem \ref{m1}  and   Theorem \ref{mainL}. We prove a  weak Lefschetz property of $S/J$ (that is in dimension one) similar to the existing one in the Artinian frame (see \cite{HMNW}, \cite{BK}), the  case $n=1$ being done in our Corollary \ref{n=1}. In the forth section a Lefschetz type property for the graded quotient $I/J$ is proved for $n=2$ and a counter example due to A. Conca is given for such a property when $n=3$, see Theorem \ref{mainL} and Remark \ref{rkL}. This implies a Lefschetz type property for the Milnor algebra $M(f)$  for $n=2$ holding in many cases, see Corollary \ref{corL} and Remark \ref{rkL0}.

For the interested readers, we point out that the duality properties of the quotient $I/J$ are used, but not discussed below, and can be found in \cite{DS}, \cite{Se} (see especially the third section) and in \cite{EM}.

A major challenge would be to find algebraic proofs for the properties of the Hilbert series of
the Milnor algebra $M(f)$ which depend on the type of the singularities of $V$ (e.g. properties holding for nodal hypersurfaces, characterized by the fact that they are the hypersurfaces with isolated singularities satisfying $I=\sqrt { J_f}$). Such results have been obtained in \cite{DS}, \cite{DSt3}, \cite{DSt} using rather advanced Hodge theory.

Two conjectures are also stated the paper, see Conjectures \ref{C1} and \ref{C2}.

\section{Hilbert series of dimension one almost complete intersections}

Let $S=K[x_0,\ldots,x_n]$ be the polynomial algebra over a field $K$, $f=\{f_0,\ldots,f_n\}$ be a system of $n+1$ homogeneous polynomials of $S$ with $\deg f_i=d_i$ and $J=(f)$. Suppose that $\dim S/J=1$ and $f_1,\ldots,f_n$ is a regular sequence in $S$. Let  $\mathbf{m}=(x_0,\ldots,x_n)$ and $I=(J:\mathbf{m}^{\infty})$  be the saturation of $J$. Then $I_p=J_p$ for $p>>0$.

Let $H_{S/J}(t)$,  $H_{S/I}(t)$ be the Hilbert  series of $S/J$, respectively $S/I$. By \cite[Proposition 4.1.8]{BH} we have   $H_{S/J}(t)=G/(1-t)$,  $H_{S/I}(t)=F/(1-t)$ for some polynomials $G,F\in {\ZZ}[t]$ with $G(1)\not=0$, $F(1)\not=0$.

\begin{Lemma} \label{l} The following statements hold.
\begin{enumerate}
\item{} $G(1)=\dim_K(S/J)_p=\dim_K(S/I)_p=F(1)$ for $p>>0$;
\item{} $S/J$, $S/I$ have the same multiplicity $G(1)$;
\item{} $\dim_K(S/J)_p=\dim_K(S/I)_p$ for $p\geq u=\max\{\deg G,\deg F\}$, in particular $I_p=J_p$ for $p\geq u$;
\item{} $\deg G$ (resp. $\deg F$) is the minimum integer $p$ for which $\dim_K(S/J)_j=G(1)$ (resp. $\dim_K(S/I)_j=F(1)$) for all $j\geq p$.
\end{enumerate}
\end{Lemma}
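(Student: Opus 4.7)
The plan is to extract everything from the elementary factorization of the Hilbert series, combined with the saturation stabilization $I_p = J_p$ for $p \gg 0$, already recorded in the paragraph preceding the lemma. First, I would write
\[
H_{S/J}(t) = \frac{G(t)}{1-t} = \Bigl(\sum_{i\geq 0} g_i t^i\Bigr)\Bigl(\sum_{j\geq 0} t^j\Bigr),
\]
so that $\dim_K(S/J)_p = \sum_{i=0}^{\min(p,\deg G)} g_i$. For $p \geq \deg G$ this partial sum equals $G(1)$, while for $p = \deg G - 1$ it equals $G(1) - g_{\deg G} \neq G(1)$, since the leading coefficient is nonzero by definition of $\deg G$. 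The same reasoning applied to $F$ yields the analogous statements for $S/I$. This immediately produces the stabilization half of (1) and the minimality characterization (4).

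To finish (1) it remains to identify $G(1) = F(1)$. Here I would use the saturation stabilization $I_p = J_p$ for $p \gg 0$: this forces $\dim_K(S/J)_p = \dim_K(S/I)_p$ for such $p$, and each side has already reached its stabilized value, so $G(1) = F(1)$. Statement (2) is then an instance of the general fact that the multiplicity of a one-dimensional finitely generated graded module with Hilbert series $P(t)/(1-t)$ and $P(1) \neq 0$ is the eventual constant value of the Hilbert function, namely $P(1)$; in our setting this common eventual value is $G(1) = F(1)$, so $S/J$ and $S/I$ share the multiplicity $G(1)$.

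Statement (3) then follows at once: for $p \geq u = \max\{\deg G, \deg F\}$ both $\dim_K(S/J)_p$ and $\dim_K(S/I)_p$ have already reached their stabilized values, which coincide by (1); since $J_p \subseteq I_p$, equality of $K$-dimensions upgrades this to $J_p = I_p$. The only non-formal input in the whole argument is the equality $G(1) = F(1)$, which is an immediate consequence of the already-stated saturation stabilization; everything else is formal bookkeeping with the factored Hilbert series, so I do not foresee any real obstacle.
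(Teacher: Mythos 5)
Your argument is correct and follows essentially the same route as the paper's proof: stabilization of the Hilbert function at $G(1)$ (resp.\ $F(1)$) for $p\geq \deg G$ (resp.\ $\deg F$), the leading-coefficient observation $\dim_K(S/J)_{\deg G-1}=G(1)-LC(G)\neq G(1)$ for the minimality in (4), the saturation equality $I_p=J_p$ for $p\gg 0$ to get $G(1)=F(1)$, and the standard multiplicity fact (cited in the paper as \cite[Proposition 4.1.9]{BH}) for (2).
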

\begin{proof}
We have $\dim_K(S/J)_p=G(1)$ for $p\geq \deg G$ and $\dim_K(S/I)_p=F(1)$ for $p\geq \deg F$. (1) holds because  $I_p=J_p$ for $p>>0$.  Also (3) holds from above. (2) follows from (1) by \cite[Proposition 4.1.9]{BH} and (4) holds since $\dim_K(S/J)_{\deg G-1}=G(1)-LC(G)\neq G(1)$, where $LC(G)$ is the leading coefficient of $G$.
\end{proof} Let $\Gamma\subset {\PP}^n$ be the zero dimensional complete intersection scheme with coordinate ring $S/(f_1,\ldots,f_n)$. Changing the coordinates we may suppose that $\Gamma\subset U_0={\PP}^n\setminus \{x_0=0\}$, where $\Gamma$ is given by $g_i=f_i(1,x_1,\ldots,x_n)$, $i\in [n]$. All local rings of $\Gamma$ are Artinian Gorenstein rings and we may apply the Cayley-Bacharach theory \cite{EGH}.
The support of $\Gamma$ consists of some points $p_1,\ldots,p_r$. A part of these points, let us say $p_1,\ldots,p_q$, $q\leq r$ are also in the support of $V(I)$. Let $\Gamma''$ be the closed subscheme of $\Gamma$ with the support $p_1,\ldots,p_q$ and the corresponding local ring at a point $p$ defined by the principal ideal generated by $g_0=f_0(1,x_1,\ldots x_n)$ in $A=K[x_1,\ldots,x_n]/(g_1,\ldots,g_n)$. In fact, $\Gamma''$ corresponds to the saturation $I$ of $J$.

Let $\Gamma'$ be the closed subscheme of $\Gamma$ with the support  $p_{q+1}\ldots,p_r$ and the corresponding local ring at a point $p$ defined by the ideal $\Ann_Ag_0$.  Then the subschemes $\Gamma'$,  $\Gamma''$ are residual to one another in the terminology from \cite{EGH}. In the same terminology, the {\em failure of $\Gamma''$ to impose independent conditions of hypersurfaces of degree $k$} means the multiplicity of $S/I$ minus $\dim_K (S/I)_k$, that is $F(1)-\dim_K(S/I)_k$.

\begin{Proposition} \label{p} Let $s=\sum_{i=1}^n d_i-n-1$ and $0\leq k\leq s$.  Then $\dim_K (((f_1,\ldots,f_n):f_0)/(f_1,\ldots,f_n))_{s-k}$ is the failure of $\Gamma''$ to impose independent conditions of hypersurfaces of degree $k$, that is  $F(1)-\dim_K(S/I)_k$.
\end{Proposition}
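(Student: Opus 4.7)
My plan is to reduce the statement directly to the Cayley-Bacharach duality for residual subschemes of a complete intersection, as set out in \cite{EGH}. The proof splits into two parts: an algebraic identification of the module in question, followed by invoking Cayley-Bacharach.

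For the identification, I would show that $(f_1,\ldots,f_n):f_0$ coincides with the saturated homogeneous ideal $I_{\Gamma'}$ of the residual subscheme $\Gamma'$. This has two ingredients. (a)~The ideal $(f_1,\ldots,f_n):f_0$ is already saturated: since $f_1,\ldots,f_n$ is a regular sequence, $S/(f_1,\ldots,f_n)$ is one-dimensional Cohen-Macaulay, so $(f_1,\ldots,f_n)$ equals its own $\mm$-saturation; if $g\mm^N\subseteq (f_1,\ldots,f_n):f_0$ then $gf_0\mm^N\subseteq (f_1,\ldots,f_n)$, and saturation forces $gf_0\in (f_1,\ldots,f_n)$, hence $g\in (f_1,\ldots,f_n):f_0$. (b)~At each closed point $p$ of $\Gamma$ the localization of $(f_1,\ldots,f_n):f_0$ is $\Ann_{A_p}(g_0)$, which by the paper's definition is the local ideal of $\Gamma'$ at $p$. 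Together (a) and (b) give $(f_1,\ldots,f_n):f_0 = I_{\Gamma'}$.

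For the application, the complete intersection $\Gamma$ is arithmetically Gorenstein with canonical module $\omega_{S/(f_1,\ldots,f_n)}\cong (S/(f_1,\ldots,f_n))(s)$, where $s=\sum_{i=1}^n d_i-n-1$. The Cayley-Bacharach duality for residual subschemes $\Gamma',\Gamma''$ in an arithmetically Gorenstein zero-dimensional $\Gamma$ of socle degree $s$ then asserts that for every $0\le k\le s$,
\[
\dim_K\bigl(I_{\Gamma'}/I_\Gamma\bigr)_{s-k}=\deg(\Gamma'')-\dim_K(S/I_{\Gamma''})_k.
\]
Inserting $I_\Gamma=(f_1,\ldots,f_n)$, $I_{\Gamma'}=(f_1,\ldots,f_n):f_0$, $I_{\Gamma''}=I$, and $\deg(\Gamma'')=F(1)$ gives the formula of the Proposition.

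The main obstacle is step (b): one must carefully compare the local $\Ann_{A_p}(g_0)$ appearing in the paper's geometric construction with the stalk at $p$ of the graded colon ideal, going back and forth between the affine chart $x_0\neq 0$ (where the $g_i=f_i(1,x_1,\ldots,x_n)$ live) and the homogeneous setting. Once this local identification is secured, the conclusion is an immediate application of the Cayley-Bacharach theory of \cite{EGH}.
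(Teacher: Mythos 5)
Your proposal is correct and follows essentially the same route as the paper: identify $(f_1,\ldots,f_n):f_0$ with the (saturated) homogeneous ideal of the residual scheme $\Gamma'$ and then invoke the Cayley--Bacharach duality \cite[Theorem CB7]{EGH} for the arithmetically Gorenstein complete intersection $\Gamma$ of socle degree $s$. The only difference is that you spell out the saturation and localization arguments behind the identification $W'_e=((f_1,\ldots,f_n):f_0)_e=(I_{\Gamma'})_e$, which the paper simply asserts.
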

\begin{proof} Let $W'_e$ (resp. $W_e$) be the linear space of all $h\in S_e$ such that $h(\Gamma')=0$  (resp. $h(\Gamma)=0$). Note that
 $W'_e=(((f_1,\ldots,f_n):f_0))_e$, $W_e=((f_1,\ldots,f_n))_e$.
  By \cite[Theorem CB7]{EGH} the dimension of the $W'_e/W_e$ is  the failure of $\Gamma''$ to impose independent conditions of hypersurfaces of degree $k$, which is enough as we have seen.
\end{proof}

\begin{Lemma} \label{big}  $\deg F\leq s+1$ and $\deg G\leq s+1+d_0$.
\end{Lemma}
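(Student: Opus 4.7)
The plan is to handle the two bounds separately, exploiting the fact that the complete intersection $B = S/(f_1,\ldots,f_n)$ has Hilbert series $H_B(t) = P(t)/(1-t)$ with $P(t) = \prod_{i=1}^n (1+t+\cdots+t^{d_i-1})$ of degree $s+1$ and $P(1) = \prod d_i = \deg\Gamma$. Thus $\dim_K B_p = \deg\Gamma$ for every $p \geq s+1$, which is to say $\Gamma$ imposes independent conditions on forms of degree $\geq s+1$. Equivalently, for such $p$ the dehomogenization map $B_p \to A$, $u \mapsto u/x_0^p$, is an isomorphism onto the coordinate ring $A = K[x_1,\ldots,x_n]/(g_1,\ldots,g_n) \cong \prod_{i=1}^r A_{p_i}$ of $\Gamma$ in the affine chart $x_0 \neq 0$, whose factors are the local Artinian Gorenstein rings at the points of $\Gamma$.

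To bound $\deg F$ I would transfer the independent conditions property to the closed subscheme $\Gamma'' \subseteq \Gamma$. For $p \geq s+1$, the composition
\[
S_p \twoheadrightarrow B_p = H^0(\mathcal O_\Gamma(p)) \twoheadrightarrow H^0(\mathcal O_{\Gamma''}(p))
\]
is surjective (the second arrow because $H^1$ of any sheaf on a $0$-dimensional scheme vanishes), and its image equals $(S/I)_p$ since $I$ is the saturated ideal of $\Gamma''$. Hence $\dim_K (S/I)_p = \deg\Gamma'' = F(1)$ for $p \geq s+1$, and Lemma~\ref{l}(4) yields $\deg F \leq s+1$.

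To bound $\deg G$ I would use the exact sequence
\[
0 \to M(-d_0) \to B(-d_0) \xrightarrow{\cdot f_0} B \to S/J \to 0, \qquad M := \bigl((f_1,\ldots,f_n):f_0\bigr)/(f_1,\ldots,f_n),
\]
whose alternating sum of Hilbert series yields $G(t) = (1-t^{d_0})P(t) + t^{d_0}(1-t)H_M(t)$. The first summand has degree exactly $s+1+d_0$, so it suffices to show that $\dim_K M_e = F(1)$ for every $e \geq s+1$ (so that the second summand has degree $\leq s+1+d_0$). Under the identification above, $\cdot f_0\colon B_e \to B_{e+d_0}$ becomes multiplication by $g_0 = f_0(1,x_1,\ldots,x_n) \in A$, a unit at $p_{q+1},\ldots,p_r$ and in the maximal ideal at $p_1,\ldots,p_q$. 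Additivity of length on the exact sequence $0 \to \Ann_{A_{p_i}}(g_0) \to A_{p_i} \xrightarrow{\cdot g_0} A_{p_i} \to A_{p_i}/g_0 A_{p_i} \to 0$ of Artinian modules gives $\ell(\Ann_{A_{p_i}}(g_0)) = \ell(A_{p_i}/g_0 A_{p_i}) = \operatorname{mult}_{\Gamma''}(p_i)$, so that
\[
\dim_K M_e = \sum_{i \leq q}\operatorname{mult}_{\Gamma''}(p_i) = \deg\Gamma'' = F(1),
\]
independent of $e \geq s+1$. The main obstacle is setting up this graded/ungraded comparison cleanly --- one must verify that dehomogenization $B_e \to A$ is an isomorphism for $e \geq s+1$ and intertwines $\cdot f_0$ with $\cdot g_0$ uniformly in $e$, so that a single pointwise length computation suffices for all such $e$.
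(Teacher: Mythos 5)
Your proof is correct, but both halves are carried out by a different mechanism than the paper's. For $\deg F\leq s+1$ the paper simply observes that $S/(I,x_0)$ is a graded quotient of $B/x_0B$ for a linear form $x_0$ regular on both $B$ and $S/I$, so that $F=H_{S/(I,x_0)}$ is bounded coefficientwise by $E=H_{B/x_0B}$; you instead prove the stronger geometric statement that $\Gamma''$ imposes independent conditions on forms of degree $\geq s+1$, via the surjectivity of $S_p\to H^0(\mathcal O_\Gamma(p))\to H^0(\mathcal O_{\Gamma''}(p))$, and then invoke Lemma~\ref{l}(4). For the second bound both arguments start from the same four-term exact sequence and both reduce to showing $\deg L\leq s+1$ for $L=(1-t)H_M$: the paper gets this from the injection $M/x_0M\hookrightarrow B/x_0B$ (using that $x_0$ is regular on $B$, hence on $M\subset B$ and on $B/M\cong f_0B(d_0)$), whereas you compute the stable value $\dim_K M_e=F(1)$ for $e\geq s+1$ by dehomogenizing to $A$ and localizing at the points of $\Gamma$. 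Your route is more geometric and yields slightly more, namely the identity $L(1)=F(1)$ (which the paper only recovers inside the proof of Theorem~\ref{main}, and which is the degree-$\geq s+1$ shadow of the Cayley--Bacharach statement in Proposition~\ref{p}); the paper's route is shorter and purely algebraic. Two minor points: the identification of $(S/I)_p$ with the image of $S_p$ in $H^0(\mathcal O_{\Gamma''}(p))$ uses that $I$ is saturated (so $H^0_{\mathbf m}(S/I)=0$), which is exactly the paper's standing assumption that $\Gamma''$ corresponds to $I$; and if $K$ is not algebraically closed you should replace local lengths by $\dim_K$ in the pointwise computation, which changes nothing.
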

\begin{proof}
Set $W=(f_1,\ldots,f_n)\subset S$, $B=S/W$. We may suppose that $x_0$ is regular on $B$, $S/I$. We have $H_B(t)=E/(1-t)$, where $E=\Pi_{i=1}^n(1+t+\ldots t^{d_i-1})$ and it follows $\deg E=s+1$ and $H_{B/x_0B}(t)=E$,  $H_{S/(I,x_0)}(t)=F$ because $x_0$ is regular on  $S/I$, $B$. Since  $\dim_K (S/(I,x_0))_j\leq \dim_K B/(x_0B)_j$ for all $j$ we see that  $\deg F\leq \deg E=s+1$.

For the second inequality we consider the exact sequence
$$0\to ((W:_Sf_0)/W)(-d_0)\to B(-d_0)\xrightarrow{f_0} B\to S/J\to 0.$$
The Hilbert series $H_{(W:_Sf_0)/W}$ has the form $L/(1-t)$ for some $L\in {\ZZ}[t]$. As $x_0$ is regular on $B$ and in particular on the image of the above middle map we get the injection $ ((W:_Sf_0)/(W,x_0(W:_Sf_0))(-d_0)\to (B/(x_0))(-d_0)$. Thus \\
$ \dim_K((W:_Sf_0)/(W,x_0(W:_Sf_0))_j\leq \dim_K(B/(x_0))_j$ for all $j$. It follows that \\
$\deg L\leq \deg E= s+1$. If $j\geq s+1+d_0$ the above exact sequence
gives
$$\dim_K(S/J)_j=\dim_KB_j-\dim_KB_{j-d_0}+\dim_K((W:_Sf_0)/W)_{j-d_0}=E(1)-E(1)+L(1)$$
and so $\deg G\leq s+1+d_0$ and $G(1)=L(1)$.
\end{proof}

\begin{Theorem} \label{main}  With the above notation, we have
$$G=(1-t^{d_0})\Pi_{j=1}^n(1+t+\ldots+t^{d_j-1})+t^{\sum_{i=0}^n d_i-n}\ F(1/t).$$
Moreover, if $S/I$ is Gorenstein, then
$$G=(1-t^{d_0})\Pi_{j=1}^n(1+t+\ldots+t^{d_j-1})+t^{\sum_{i=0}^n d_i-n-\deg F}\ F.$$
\end{Theorem}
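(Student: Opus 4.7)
The plan is to compute the polynomial $L(t):=(1-t)H_{(W:f_0)/W}(t)$ and feed it into the four-term exact sequence
$$0 \to ((W:_Sf_0)/W)(-d_0) \to B(-d_0) \xrightarrow{\cdot f_0} B \to S/J \to 0$$
already used in the proof of Lemma \ref{big}, where $W=(f_1,\ldots,f_n)$ and $B=S/W$. Additivity of Hilbert series on this sequence gives, with $E=\prod_{j=1}^n(1+t+\cdots+t^{d_j-1})$,
$$G=(1-t^{d_0})E+t^{d_0}L,$$
so proving the theorem reduces to the identity $L(t)=t^{s+1}F(1/t)$.

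Set $b_j=\dim_K((W:f_0)/W)_j$, $a_k=\dim_K(S/I)_k$, and write $F=\sum c_i t^i$, so that $c_0=1$ and $c_i=0$ for $i>\deg F$ (recall $\deg F\leq s+1$ by Lemma \ref{big}). Proposition \ref{p} gives $b_j=F(1)-a_{s-j}$ for $0\leq j\leq s$, and a quick telescoping yields
$$L_j = b_j-b_{j-1} = a_{s-j+1}-a_{s-j}=c_{s+1-j}\qquad (0\leq j\leq s),$$
the case $j=0$ using $b_{-1}=0$ together with $a_{s+1}=F(1)$, which is exactly the bound $\deg F\leq s+1$.

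What remains is the range $j\geq s+1$, which lies outside the Cayley--Bacharach range of Proposition \ref{p}; this is where I expect the main obstacle. The key is to feed Lemma \ref{big} back into the same exact sequence at the boundary degree. Since $\dim_K B_p=E(1)$ as soon as $p\geq s+1=\deg E$, the alternating sum of dimensions in the exact sequence, evaluated at total degree $p=j+d_0$ with $j\geq s+1$, collapses to
$$\dim_K(S/J)_{j+d_0}=\dim_K B_{j+d_0}-\dim_K B_j+b_j=b_j.$$
Now $\deg G\leq s+1+d_0$ by Lemma \ref{big}, so Lemma \ref{l}(4) gives $\dim_K(S/J)_{j+d_0}=G(1)=F(1)$ for every $j\geq s+1$. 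Hence $b_j=F(1)$ throughout this range; in particular $L_{s+1}=1=c_0$ and $L_j=0$ for $j>s+1$. Assembling,
$$L(t)=\sum_{j=0}^{s+1}c_{s+1-j}\,t^j = t^{s+1}F(1/t),$$
and plugging into $G=(1-t^{d_0})E+t^{d_0}L$ yields the first formula, using $d_0+s+1=\sum_{i=0}^n d_i-n$.

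For the ``moreover'' part, a one-dimensional Cohen--Macaulay graded ring is Gorenstein if and only if its $h$-polynomial is palindromic, so the Gorenstein hypothesis on $S/I$ forces $F(t)=t^{\deg F}F(1/t)$. Substituting $F(1/t)=F(t)/t^{\deg F}$ into the first formula immediately produces the second. The whole argument uses only the single Cayley--Bacharach identity of Proposition \ref{p} together with the degree bounds of Lemma \ref{big}; no further detailed bookkeeping on the residual scheme $\Gamma'$ is needed.
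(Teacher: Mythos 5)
Your proof is correct and follows essentially the same route as the paper's: the same four-term exact sequence built from $B(-d_0)\xrightarrow{f_0}B$, the Cayley--Bacharach identity of Proposition \ref{p} in degrees $\le s$, and the degree bounds of Lemma \ref{big} to control what happens beyond that range. The only organizational difference is that you pin down the polynomial $(1-t)H_{(W:f_0)/W}$ exactly in all degrees before substituting, whereas the paper works with congruences modulo $t^{s+d_0+1}$ and then upgrades to an equality by comparing the stabilized tails; both arguments use identical ingredients.
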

\begin{proof} By Proposition \ref{p} we have $\dim_K (W'/W)_{j}=F(1)-\dim_K (S/I)_{s-j}$, $0\leq j\leq s$, where  $W'=(W:_Sf_0)$. Let $F=\sum_{i=0}^{\deg F}w_it^i$, $w_i\in {\ZZ}$. We have $F'= t^{\deg F} F(1/t)=\sum_{i=0}^{\deg F}w'_it^i$, where $w'_i=w_{\deg F-i}$. Note that $F'(1)=F(1)$ and $\deg F'=\deg F$. If $S/I$ is Gorenstein we have $w_k=w_{\deg F-k}$ and so $F'=F$. If  $0\leq j\leq \deg F$
then $$\dim_K (S/I)_j=\sum_{k=0}^j w_k=F(1)-\sum_{k>j}^{\deg F} w_k=F'(1)-\sum_{k>j}^{\deg F} w'_{\deg F-k}=$$
$$F'(1)-\sum_{e=0}^{\deg F-j-1} w'_e.$$ In fact the above equality follows for all $j$ and we get  $$H_{W'/W}(t)=t^{s-\deg F+1}F'/(1-t)$$ for all $0\leq j\leq s$.

From  the exact sequence
$$0\to S/W'(-d_0)\xrightarrow{f_0} S/W\to S/J\to 0$$
we get $H_{S/J}(t)= H_{S/W}(t)-t^{d_0}H_{S/W'}(t)$.
But
$$\dim(S/W')_j=\dim(S/W)_j-\dim(W'/W)_j=\dim(S/W)_j-\dim (S/I)_{\deg F-s+j-1}$$
for $0\leq j\leq s$. We have $H_{S/W}(t)=[\Pi_{i=1}^n (1+\ldots +t^{d_i-1})]/(1-t)$ since $\{f_1,\ldots,f_n\}$ is regular. Then
$$H_{S/W'}(t)\equiv [(\Pi_{i=1}^n (1+\ldots +t^{d_i-1}))/(1-t)]-[t^{s+1-\deg F}F'/(1-t)]$$
 modulo $t^{s+1}$
  since  $\deg F\leq s+1$ by Lemma \ref{big}. It follows that
  $$H_{S/J}(t)\equiv [((1-t^{d_0})(\Pi_{i=1}^n (1+\ldots +t^{d_i-1})))/(1-t)] +[t^{s+d_0-\deg F+1}F'/(1-t)]$$
modulo $t^{s+d_0+1}$.
On the other hand, for $j\geq s+d_0+1$ we have   $\dim_K (S/J)_j=G(1)$ because $\deg G\leq s+1+d_0$ by Lemma \ref{big}. As $\dim_K (S/I)_k=F(1)=G(1)$ for $k\geq \deg F$ we see that the above congruence is in fact an equality, which is enough.
 \end{proof}

\begin{Proposition} \label{inj}  The map $(S/W)_j\xrightarrow{f_0} (S/W)_{j+d_0}$ is injective for $j\leq v=s-\deg F$ but is not for $j=v+1$.
\end{Proposition}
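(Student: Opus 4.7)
The plan is to reduce the statement to a pure Hilbert-series computation that is essentially already carried out inside the proof of Theorem \ref{main}. The key point is that the kernel of multiplication by $f_0$ on $S/W$ is, in each degree, controlled by the module $W'/W$ where $W' = (W :_S f_0)$; namely, the short exact sequence
\[
0 \to (W'/W)(-d_0) \to (S/W)(-d_0) \xrightarrow{f_0} S/W
\]
shows that the kernel of $(S/W)_j \xrightarrow{f_0} (S/W)_{j+d_0}$ is exactly $(W'/W)_j$. So the whole problem reduces to computing the lowest degree in which $(W'/W)_j$ is nonzero.

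Next I would invoke the formula obtained in the proof of Theorem \ref{main},
\[
H_{W'/W}(t) \;=\; t^{s-\deg F + 1}\,F'/(1-t),
\]
where $F'(t) = t^{\deg F}F(1/t)$. Since the constant term $w'_0$ of $F'$ equals the leading coefficient $w_{\deg F}$ of $F$, and the latter is nonzero by definition of $\deg F$, the lowest degree in which $H_{W'/W}$ has a nonzero coefficient is precisely $s-\deg F+1 = v+1$. Consequently $(W'/W)_j = 0$ for all $j \leq v$, while $(W'/W)_{v+1}$ has dimension $w_{\deg F} > 0$. Translating back, the map $\cdot f_0$ is injective in every degree $j \leq v$ and has a nontrivial kernel in degree $v+1$, which is exactly what we want.

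There is no real obstacle here, because all the serious work was already done in the proof of Theorem \ref{main}: the Cayley--Bacharach input of Proposition \ref{p} and the bookkeeping of the palindromic-type manipulation $F \mapsto F'$ furnish the Hilbert series of $W'/W$ in closed form, and the statement of Proposition \ref{inj} is simply the degree-by-degree reading of this formula at its lowest nonzero step. The one place that requires a brief check is that the coefficient $w_{\deg F}$ really is strictly positive, which is immediate from Lemma \ref{l}(4), since $\deg F$ is the smallest integer $p$ with $\dim_K(S/I)_j = F(1)$ for all $j \geq p$, forcing $w_{\deg F} \neq 0$; positivity then follows because $F(t)$ records dimensions of graded pieces up to a telescoping via the factor $1/(1-t)$.
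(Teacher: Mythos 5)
Your proof is correct and follows essentially the same route as the paper: both identify the kernel of multiplication by $f_0$ on $S/W$ with $W'/W$ and then read off when $(W'/W)_j$ vanishes from the Cayley--Bacharach identity $\dim_K(W'/W)_j=F(1)-\dim_K(S/I)_{s-j}$ of Proposition \ref{p} (you do this via the closed-form series $t^{s-\deg F+1}F'/(1-t)$ derived in Theorem \ref{main}, the paper directly via the dimension equality, but these are the same computation). Your observation that the leading coefficient $w_{\deg F}\neq 0$ is exactly the point the paper uses, so there is no gap.
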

\begin{proof} As in the beginning of the proof of the above theorem, we have the  equality $\dim_K(W'/W)_j=F(1)-\dim_K (S/I)_{s-j}$
where $W'=(W:_Sf_0)$. Clearly, $\dim_K(S/I)_{s-j}=F(1) $ for $j\leq v$ but not for $j=v+1$, which is enough.
\end{proof}

Consider the graded $S-$submodule $AR({\bf f}) \subset S^{n+1}$ of {\it all relations} involving the $f_j$'s, namely the sums of homogeneous elements of degree $m$
$$a=(a_0,...,a_n) \in AR({\bf f})_m,$$
where, by definition, each $a_j$ a homogeneous polynomial in $S$ of degree $m-d_j$ for some integer $m$ and
\begin{equation}
\label{rel}
 (R_m):  \   \    \  \   a_0f_0+a_1f_1+...+a_nf_n=0.
\end{equation}
Inside $AR({\bf f})$ there is the $S-$submodule of {\it Koszul relations} $KR({\bf f})$, called also the
submodule of {\it trivial relations}, spanned by the relations $t_{ij} \in AR({\bf f})_{d_id_j-1}$ for
$0 \leq i <j \leq n$, where $t_{ij}$ has the $i$-th coordinate equal to $f_j$, the $j$-th coordinate equal to $-f_i$ and the other coordinates zero.

The quotient module $ER({\bf f})=AR({\bf f})/KR({\bf f})$ may be called the module of {\it essential relations} of the sequence ${\bf f}$, or non trivial relations, since it tells us which are the relations which we should add to the Koszul relations in order to get all the relations, or syzygies, involving the $f_j$'s.

The following result is obvious, the last claim coming from Proposition \ref{inj} .

\begin{Corollary} \label{ident}
The mapping $ER({\bf f})_{j+d_0 }  \to \ker \{(S/W)_j\xrightarrow{f_0} (S/W)_{j+d_0}\} $ given by
$$(a_0,...,a_n) \mapsto [a_0]$$
induces an isomorphism of graded $S$-modules $ER({\bf f})_{ }  \to \ker \{(S/W)\xrightarrow{f_0} (S/W)(d_0)\} $. In particular, the minimal degree of a relation \eqref{rel} is $m=\sum_{i=0}^n d_i-n-\deg F.$

\end{Corollary}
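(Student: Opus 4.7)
The plan is to identify $ER({\bf f})_m$ with $(W'/W)_{m-d_0}$, where $W'=(W:_Sf_0)$, and then combine this with Proposition \ref{inj} to pin down the minimal degree of a relation.

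First I would check that $(a_0,\ldots,a_n)\mapsto [a_0]$ is well defined: any relation in $AR({\bf f})_m$ satisfies $a_0f_0=-\sum_{i\geq 1}a_if_i\in W$, so $a_0$ lies in $W'$ and $[a_0]$ sits in $(W'/W)_{m-d_0}$, which coincides with $\ker\{(S/W)_{m-d_0}\xrightarrow{f_0}(S/W)_m\}$. Surjectivity is equally direct: any $c$ whose class lies in that kernel satisfies $cf_0=-\sum_{i\geq 1}c_if_i$, producing a preimage $(c,c_1,\ldots,c_n)\in AR({\bf f})_m$ that maps to $[c]$.

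The heart of the argument is showing that the kernel is exactly $KR({\bf f})$. Each Koszul generator $t_{ij}$ with $1\leq i<j\leq n$ has zero first coordinate, and $t_{0i}$ has first coordinate $f_i\in W$, so $KR({\bf f})$ is mapped to zero. For the converse, given a relation with $a_0\in W$, write $a_0=-\sum_{i\geq 1}b_if_i$ and subtract $\sum_{i\geq 1}b_i\,t_{0i}$ to reduce to a syzygy of the form $(0,a_1-b_1f_0,\ldots,a_n-b_nf_0)$ involving only $f_1,\ldots,f_n$. This reduction step is the one genuinely substantive point, but since $f_1,\ldots,f_n$ is a regular sequence its syzygies are Koszul-generated (a standard consequence of the acyclicity of the Koszul complex), so the residual relation is a combination of $t_{ij}$ with $1\leq i<j\leq n$, and therefore the original relation also lies in $KR({\bf f})$.

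Putting the pieces together yields the graded isomorphism $ER({\bf f})\iso\ker\{S/W\xrightarrow{f_0}(S/W)(d_0)\}$. For the numerical claim, the smallest $m$ with a nonzero essential relation is the smallest $m$ with $(W'/W)_{m-d_0}\neq 0$, equivalently the smallest $j=m-d_0$ for which $(S/W)_j\xrightarrow{f_0}(S/W)_{j+d_0}$ fails to be injective. Proposition \ref{inj} identifies this value as $j=v+1=s-\deg F+1$, whence $m=s+d_0+1-\deg F=\sum_{i=0}^nd_i-n-\deg F$, as claimed.
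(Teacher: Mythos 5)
Your proof is correct and follows the same route the paper intends: the paper simply declares the isomorphism obvious and derives the last claim from Proposition \ref{inj}, exactly as you do, with the identification $ER({\bf f})_{j+d_0}\iso (W'/W)_{j}$ resting on the standard fact that the syzygies of the regular sequence $f_1,\ldots,f_n$ are generated by the Koszul relations. (Only a harmless sign slip: having written $a_0=-\sum_{i\geq 1}b_if_i$, you should add, not subtract, $\sum_{i\geq 1}b_i\,t_{0i}$ to kill the $0$-th coordinate.)
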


To get precise numerical results, consider the case when the ideal $I$ is a complete intersection of type $(a_1,...,a_n)$, i.e. $I=(g_1,...,g_n)$ for some homogeneous polynomials of degree $\deg g_j=a_j \geq 1$. We can suppose that the integers $a_j$ satisfy
$$a_1 \leq a_2 \leq ... \leq a_n.$$
Then we have the following result.

\begin{Theorem}\label{comput} Suppose $d_1 \leq d_2 \leq ... \leq d_n$. Then the following statements hold.
\begin{enumerate}
\item{} $F=\Pi_{j=1}^n(1+\ldots +t^{a_j-1})$, in particular we have the following formulas \\
$F(1)=a_1a_2\cdots a_n$ and $\deg F=\sum_{j=1}^n a_j-n.$
\item{} $a_j\leq d_j$ for $j=1,...,n$ and $d_0 \geq a_1$.
\item{} If $d_0 >a_1$  or $d_0=a_1=a_2$, then
$$\deg G=\sum_{i=0}^n d_i-n-a_1 \geq \deg F.$$
\end{enumerate}
\end{Theorem}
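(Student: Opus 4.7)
The plan is to handle the three parts in sequence, using the Koszul resolution of $S/I$ for (1), a degree-matching argument via $f_i=\sum c_{ij}g_j$ for (2), and Theorem \ref{main} combined with careful coefficient analysis for (3).

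For (1), because $I=(g_1,\ldots,g_n)$ is a homogeneous complete intersection in the polynomial ring $S$, the Koszul complex on $g_1,\ldots,g_n$ is a minimal graded free resolution of $S/I$. This immediately gives
\[
H_{S/I}(t)=\frac{\prod_{j=1}^n(1-t^{a_j})}{(1-t)^{n+1}},
\]
so clearing one factor of $(1-t)$ from the denominator yields $F=\prod_{j=1}^n(1+t+\cdots+t^{a_j-1})$. From this, $F(1)=a_1\cdots a_n$ and $\deg F=\sum_{j=1}^n(a_j-1)=\sum_{j=1}^n a_j-n$ are immediate.

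For (2), each $f_i$ lies in $I$, so I can write $f_i=\sum_{j=1}^n c_{ij}g_j$ with $c_{ij}$ homogeneous of degree $d_i-a_j$ (and zero if this quantity is negative). Suppose for contradiction that $j_0$ is minimal with $a_{j_0}>d_{j_0}$. Then for every $i\le j_0$ and every $k\ge j_0$, $d_i\le d_{j_0}<a_{j_0}\le a_k$, forcing $c_{ik}=0$. Hence $f_1,\ldots,f_{j_0}$ all lie in $(g_1,\ldots,g_{j_0-1})$, an ideal of height at most $j_0-1$, contradicting the fact that the first $j_0$ elements of a regular sequence generate an ideal of height $j_0$. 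The bound $d_0\ge a_1$ follows from the same type of argument applied to $f_0$: assuming $f_0\ne0$, some $c_{0j}\ne0$ and hence $d_0\ge a_j\ge a_1$.

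For (3), I use the Gorenstein form of Theorem \ref{main}, namely
\[
G=(1-t^{d_0})E+t^{\sigma-n-\deg F}F,\qquad E=\prod_{j=1}^n(1+t+\cdots+t^{d_j-1}),
\]
with $\sigma=\sum_{i=0}^n d_i$. Since both $E$ and $F$ are palindromic, a direct computation shows that the coefficient of $t^{\sigma-n-j}$ in $G$ equals $E_{j-d_0}-E_j+F_j$, with the convention that out-of-range coefficients vanish. For $0\le j<a_1$, the inequalities $a_1\le d_0$ and $a_1\le a_k\le d_k$ coming from (2) give $E_j=F_j=\binom{j+n-1}{n-1}$ and $E_{j-d_0}=0$, so this coefficient vanishes; hence $\deg G\le\sigma-n-a_1$. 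The comparison $\sigma-n-a_1\ge\deg F$ reduces to $\sum_{i=0}^n d_i\ge a_1+\sum_{j=1}^n a_j$, which is immediate from $d_0\ge a_1$ and $d_j\ge a_j$.

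The main obstacle is to verify that, under the alternative hypothesis $d_0>a_1$ or $d_0=a_1=a_2$, the coefficient at $j=a_1$ is actually non-zero, so that the equality $\deg G=\sigma-n-a_1$ holds. A monomial count gives $F_{a_1}=\binom{a_1+n-1}{n-1}-r$ and $E_{a_1}=\binom{a_1+n-1}{n-1}-r'$, where $r=\#\{k:a_k=a_1\}$ and $r'=\#\{k:d_k=a_1\}\le r$. The coefficient at $j=a_1$ therefore equals $r'-r$ in the case $d_0>a_1$ and $1+r'-r$ in the case $d_0=a_1$ (where the extra value $E_0=1$ appears). Non-vanishing in each subcase amounts to excluding a degenerate configuration of the coefficient matrix $(c_{ij})$ relating $(f_1,\ldots,f_n)$ to $(g_1,\ldots,g_n)$, and exploits the almost-complete-intersection hypothesis ($f_0\notin(f_1,\ldots,f_n)$) together with the ordering $d_1\le\cdots\le d_n$ and the constraint $a_1=a_2$ in the second case.
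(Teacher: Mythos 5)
Parts (1) and (2) of your argument are correct and essentially the paper's own: (1) is the Koszul resolution of the complete intersection $I$, and (2) is the same degree/height argument (the paper treats the case $d_2<a_2$ and says ``one continues in this way''; your minimal-$j_0$ formulation is just the clean general version). Your setup for (3) is also correct and in fact tidier than the paper's: where the paper rewrites $G$ as $E-t^{d_0}\bigl[E-\prod_j(t^{d_j-a_j}+\cdots+t^{d_j-1})\bigr]$ and tracks ``obvious simplifications'' in the bracket, you use palindromy of $E$ and $F$ to get the exact identity $G_{\sigma-n-j}=E_{j-d_0}-E_j+F_j$ (with $\sigma=\sum_{i=0}^n d_i$), from which the vanishing for $0\le j<a_1$, hence $\deg G\le\sigma-n-a_1$, and the inequality $\sigma-n-a_1\ge\deg F$ all follow correctly.

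The last step, however, is a genuine gap and not a routine verification. You correctly reduce everything to the non-vanishing of $r'-r$ (when $d_0>a_1$) resp.\ $1+r'-r$ (when $d_0=a_1=a_2$), but then only assert that this ``amounts to excluding a degenerate configuration''; no such exclusion is given, and none is available from the stated hypotheses. Concretely, take $n=2$, $f_0=x_0x_2^2$, $f_1=x_1$, $f_2=x_2^3$: then $f_1,f_2$ is regular, $f_0\notin(f_1,f_2)$, $I=(x_1,x_2^2)$ is a complete intersection of type $(a_1,a_2)=(1,2)$ and $d_0=3>a_1=1$, yet $G=1+t+t^2-t^3$ has degree $3$ rather than $\sum_i d_i-n-a_1=4$; here $r=r'=1$, so your coefficient at $j=a_1$ is exactly $0$. (The example $f_0=x_2^2$, $f_1=x_1^2$, $f_2=x_0x_2^2$, with $I=(x_1^2,x_2^2)$ and $G=(1+t)^2$, does the same for the branch $d_0=a_1=a_2$.) To be fair, the paper's own proof has the same lacuna: it asserts without argument that the bracket contributes a nonzero coefficient in its top possible degree (``the claim is established''; ``the coefficient \dots is $<-1$''). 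What the hypotheses actually yield is only that $r'<r$ when $d_0=a_1$, since $r'=r$ would place the $r+1$ forms $f_0,f_1,\dots,f_r$ in the $r$-dimensional space $I_{a_1}$ and force $f_0\in(f_1,\dots,f_n)$. Your argument closes once one additionally knows $\#\{k:\ a_k=a_1,\ d_k>a_1\}\ge 1$ in the first branch and $\ge 2$ in the second; both requirements are harmless in the equal-degree (Jacobian) situation the theorem is aimed at, so you should either add such a hypothesis explicitly or weaken the conclusion of (3) to the inequality $\deg G\le\sum_i d_i-n-a_1$.
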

\begin{proof} The first claim is clear, since $g_1,...,g_n$ form a regular sequence.
For any $j$, we have $f_j \in I$, hence $d_j \geq a_1$. In particular this holds for $j=0$ and $j=1$.
Assume that $d_2<a_2$. This would imply the inclusion of ideals $(f_1,f_2) \subset (g_1)$, a contradiction because $f_1,\ldots,f_n$ is regular. One continues in this way to prove the second claim.

For the proof of the third claim, we write  the polynomial $G$ using the equality in Theorem \ref{main} in the following form
$$\Pi_{j=1}^n(1+t+\ldots+t^{d_j-1})-t^{d_0}[\Pi_{j=1}^n(1+t+\ldots+t^{d_j-1})-\Pi_{j=1}^n(t^{d_j-a_j}+\ldots+t^{d_j-1})].$$
The first product yields powers of $t$ up to $a=\sum_{i=1}^n d_i-n$ while the second term will produce powers of $t$ up to at most $b=\sum_{i=0}^n d_i-n-a_1$ due to obvious simplifications in the difference in the bracket. If $d_0>a_1$, then $b>a$ and the claim is established. If $d_0=a_1=a_2$, then $a=b$,  the coefficient of $t^a$ coming from the first product is $1$ and the coefficient of $t^a$ coming from the difference is $< -1$.

  Finally note that
 $\deg G-\deg F= (d_0-a_1)+ \sum_{i=1}^n (d_i-a_i)\geq 0$ by (1).
 \end{proof}

\section{Lefschetz properties of dimension one  almost complete intersections }

Let $A$ be a standard graded  $K$-algebra and $h\in A$ a homogeneous form of degree $k$. The element $h$ is called a {\em Lefschetz element} if for all integers $i$ the $K$-linear map $h:A_i\to A_{i+k}$ induced by the multiplication with $h$ has maximal rank (usually we ask for $A$ to be Artinian in this definition). We need the following theorem (see \cite[Theorem 2.3]{HMNW}, or \cite[Corollary 2.4]{BK}) :

\begin{Theorem}(Harima-Migliore-Nagel-Watanabe) Let $n=2$ and $A=S/(f_0,f_1,f_2)$ be a graded complete intersection, that is $\{f_0,f_1,f_2\}$ form a system of parameters of $S$.  Then there exists a linear form $l$ of $S$ which induces a Lefschetz element for $A$.
\end{Theorem}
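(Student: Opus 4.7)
The plan is to combine Gorenstein duality for the codimension three complete intersection $A=S/(f_0,f_1,f_2)$ with a specialization argument and a test-case computation, reducing the question to verifying the Lefschetz property at one explicit algebra.

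I would begin by recording the structure of $A$. Since $\{f_0,f_1,f_2\}$ is a regular sequence in the three-variable polynomial ring $S$, the algebra $A$ is Artinian Gorenstein with Hilbert series
$$H_A(t)=\prod_{i=0}^{2}\bigl(1+t+\cdots+t^{d_i-1}\bigr),$$
socle degree $\sigma=d_0+d_1+d_2-3$, and a perfect Gorenstein pairing $A_i\times A_{\sigma-i}\to K$. Under this pairing, the multiplication map $\cdot l\colon A_i\to A_{i+1}$ is transpose to $\cdot l\colon A_{\sigma-i-1}\to A_{\sigma-i}$, so a linear form $l$ is a weak Lefschetz element if and only if $\cdot l\colon A_i\to A_{i+1}$ is surjective for every $i$ with $2i\ge \sigma-1$; equivalently, writing $\bar A=A/lA$, one needs $\bar A_j=0$ for every $j$ past the middle degree.

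Next I would set up a parameter space. Let $U\subset S_{d_0}\times S_{d_1}\times S_{d_2}$ be the irreducible Zariski-open subset of triples forming a regular sequence; on $U$ the Hilbert function of $A$ is constant by the Koszul resolution. For each $i$ the rank of $\cdot l\colon A_i\to A_{i+1}$ is upper semicontinuous in $\bigl((f_0,f_1,f_2),l\bigr)\in U\times S_1$, so the set of pairs for which $\cdot l$ has maximal rank in every degree simultaneously is Zariski-open. To exhibit one such pair I would take the monomial complete intersection $A_0=S/(x_0^{d_0},x_1^{d_1},x_2^{d_2})$ with $l_0=x_0+x_1+x_2$; Stanley's theorem, proved by identifying $A_0$ with a truncation of the cohomology ring of $\PP^{d_0-1}\times\PP^{d_1-1}\times\PP^{d_2-1}$ and applying hard Lefschetz, shows that $l_0$ is a strong (hence weak) Lefschetz element for $A_0$. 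Openness of the Lefschetz locus then yields the desired $l$ for every $(f_0,f_1,f_2)$ in a Zariski-open neighborhood of the monomial point.

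The main obstacle is that pure semicontinuity only produces a Lefschetz linear form for \emph{generic} complete intersections of the prescribed degrees, whereas the statement asks for \emph{every} such $A$. To remove the genericity I would follow HMNW and work directly with $\bar A=K[x_0,x_1]/(\bar f_0,\bar f_1,\bar f_2)$ obtained after a linear change of coordinates making $l=x_2$. For a generic choice of $l$ the ideal $(\bar f_0,\bar f_1,\bar f_2)$ is $\mathbf{m}$-primary and minimally generated by three forms in two variables, and the Hilbert-Burch theorem produces a resolution
$$0\to K[x_0,x_1]^{\oplus 2}\xrightarrow{\ M\ }K[x_0,x_1]^{\oplus 3}\to (\bar f_0,\bar f_1,\bar f_2)\to 0$$
from which $H_{\bar A}$ is computed numerically in terms of the degrees of the entries of $M$, which in turn are pinned down by $(d_0,d_1,d_2)$ alone. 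A direct check that $H_{\bar A}(j)=0$ for $j>\sigma/2$, combined with the duality step, yields the required Lefschetz linear form for every $(f_0,f_1,f_2)\in U$. The delicate point is controlling the shape of the Hilbert-Burch matrix $M$ uniformly in the family, which is where the codimension three Gorenstein structure and the characteristic zero hypothesis enter in an essential way.
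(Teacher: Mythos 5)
You have correctly set up the two easy parts of the argument: the Gorenstein duality reduction (surjectivity of $\cdot l\colon A_i\to A_{i+1}$ past the middle degree dualizes to injectivity below it, and unimodality and symmetry of $H_A$ make this equivalent to maximal rank in every degree), and the semicontinuity argument which, combined with Stanley's theorem for the monomial complete intersection, proves the statement for a Zariski-open dense set of triples $(f_0,f_1,f_2)$. Note that the paper does not prove this theorem; it quotes it from \cite{HMNW} and \cite{BK}, so the only issue is whether your argument is complete on its own.

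It is not. The passage from generic to arbitrary complete intersections --- which is the entire content of the theorem --- rests on your claim that the degrees of the entries of the Hilbert--Burch matrix of $\bar I=(\bar f_0,\bar f_1,\bar f_2)\subset K[x_0,x_1]$ are ``pinned down by $(d_0,d_1,d_2)$ alone''. That claim is false for a general height-two ideal minimally generated by three forms: Hilbert--Burch only forces the two syzygy degrees $e_1\le e_2$ to satisfy $e_1+e_2=d_0+d_1+d_2$, while the Hilbert function of $\bar A=A/lA$ depends on the individual value of $e_1$. The theorem is precisely the assertion that, because $(f_0,f_1,f_2)$ is a complete intersection in three variables and $l$ is general, $e_1$ is forced to be large enough, equivalently $\bar A_j=0$ for all $j>(d_0+d_1+d_2-3)/2$. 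You acknowledge this as ``the delicate point'' where the Gorenstein structure and characteristic zero enter, but you give no argument for it, so the proof covers only generic complete intersections. This is exactly the step the two cited proofs supply by nontrivial means: \cite{HMNW} via an analysis of the link $((\bar f_1,\bar f_2):\bar f_0)$ inside the two-variable Artinian Gorenstein algebra $K[x_0,x_1]/(\bar f_1,\bar f_2)$, and \cite{BK} by proving semistability of the syzygy bundle $\mathcal K$ on $\PP^2$ (or treating the destabilizing subsheaf separately) and applying the Grauert--M\"ulich restriction theorem to a general line --- the same mechanism this paper reuses in the proof of Theorem \ref{mainL}. One of these inputs, or a substitute for it, must be added for your proof to be complete.
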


\begin{Proposition} In the notation and assumptions of the above theorem, for any   $0\leq i\leq 2$ there is  a change of coordinates such that $f_i$ induces a Lefschetz element for $S/(x_0,f_0,\ldots,f_{i-1},f_{i+1},\ldots f_2)$.
\end{Proposition}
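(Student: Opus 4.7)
The plan is to reduce the Lefschetz property of $f_0$ on $S/(x_0,f_1,f_2)$ to the Lefschetz property of a generic linear form on $A=S/(f_0,f_1,f_2)$ via the snake lemma, and then invoke the theorem just stated. By relabelling I may assume $i=0$.

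First I would apply the theorem to $A$, obtaining a nonempty Zariski-open set $U\subset S_1$ of Lefschetz linear forms for $A$. The set $V\subset S_1$ of linear forms $l$ for which $\{l,f_1,f_2\}$ is a regular sequence (equivalently, $(l,f_1,f_2)$ is $\mathbf{m}$-primary) is also a nonempty Zariski-open subset of $S_1$, since it is the complement of the union of the degree-one pieces of the finitely many associated primes of $S/(f_1,f_2)$. As $K$ is infinite, $U\cap V\neq\emptyset$; pick $l\in U\cap V$ and perform a linear change of coordinates sending $l$ to $x_0$. After this change I may assume that $x_0$ is a Lefschetz element for $A$ and that $B:=S/(x_0,f_1,f_2)$ is Artinian.

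Next I would use the short exact sequence of graded $S$-modules
$$0\to M'(-d_0)\xrightarrow{f_0} M'\to A\to 0,\qquad M':=S/(f_1,f_2),$$
which is exact because $\{f_0,f_1,f_2\}$ is a regular sequence. Multiplication by $x_0$ sends this sequence to its shift by $1$, and, $x_0$ being a nonzerodivisor on $M'$, the snake lemma produces
$$0\to \Ker(x_0\colon A\to A(1))(-1)\to B(-d_0)\xrightarrow{f_0} B\to A/x_0A\to 0.$$
Reading this in each degree $j$ identifies the kernel of $f_0\colon B_{j-d_0}\to B_j$ with $\Ker(x_0\colon A_{j-1}\to A_j)$ and its cokernel with $\Coker(x_0\colon A_{j-1}\to A_j)$. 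Hence $f_0$ has maximal rank in every degree on $B$ if and only if $x_0$ does so on $A$, which holds by construction of $l$.

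The main obstacle is the genericity step, namely ensuring the HMNW-Lefschetz locus meets the open locus of $l$'s giving a regular sequence with $f_1,f_2$. This is routine because both loci are Zariski-open and nonempty in $S_1$ and $K$ is infinite; beyond this, the argument is a mechanical combination of the theorem above with the snake lemma.
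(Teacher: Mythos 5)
Your proof is correct and follows essentially the same route as the paper: obtain a Lefschetz linear form $l$ for $A=S/(f_0,f_1,f_2)$ from the Harima--Migliore--Nagel--Watanabe theorem, move it to $x_0$ by a linear change of coordinates, and then exchange the roles of $x_0$ and $f_0$. The only differences are that where the paper simply cites Lemma~1.1 of Herzog--Popescu for the exchange step you prove it directly via the snake lemma applied to $0\to M'(-d_0)\xrightarrow{f_0} M'\to A\to 0$, and that you are more explicit than the paper about choosing $l$ generically so that it is also regular on $S/(f_1,f_2)$, a hypothesis the exchange argument needs.
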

\begin{proof} Let $l\in S_1$ be the form given by the above theorem for $A$. We may suppose that $l=\sum_{i=0}^n b_ix_i$, $b_i\in K$, $b_0\not =0$. Taking the $K$-automorphism $\varphi$ of $S$ given by $x_0\to l$ and $x_i\to x_i$ for $1\leq i\leq n$ we see that $x_0$ induces a Lefschetz element for $S/{\varphi}^{-1}(f_0,f_1,f_2)$, that is we may suppose after a change of coordinates that $x_0$ induces a Lefschetz element for $S/(f_0,f_1,f_2)$. Then by \cite[Lemma 1.1]{HP} (see also \cite[Lemma 3.1]{P}), we see that  $f_0$ induces a Lefschetz element for $S/(x_0,f_1,f_2)$.
\end{proof}

Next we study a  similar property for almost complete intersections.  Let $ S=K[x_0,\ldots,x_n]$ and  $f_i$, $0\leq i\leq n$ be some homogeneous polynomials of $S$ with degree $d_i$. Set $J=(f_0,\ldots,f_n)$. Suppose that $f_1,\ldots,f_n$ is regular in $S$. We may choose a linear form  $l$  regular on $B=S/(f_1,\ldots,f_n)$. Set $J'=(J,l)$. Then $S/J'$ is Artinian and let $L=H_{S/J'}(t)\in {\ZZ}[t]$. Since $\dim_K(S/J')_j\leq \dim_K(S/(l,f_1,\ldots,f_n))_j=0$ for $j>q=\sum_{i=1}^n (d_i-1)$ we see that $\deg L\leq q$ (in our previous notation from Proposition \ref{p} and Lemma \ref{big},   $q=s+1$).

 \begin{Proposition}  \label{p1}  The following statements are equivalent for an integer $p\in {\NN}$.
\begin{enumerate}
\item{}
 the map $(B/(l))_{p-d_0}\xrightarrow{f_0} (B/(l))_p$ is surjective,
 \item{}
 the map $(B/(l))_{q-p}\xrightarrow{f_0} (B/(l))_{q-p+d_0}$ is injective.
 \end{enumerate}
 In particular, both statements hold when $p>\deg L$.
 \end{Proposition}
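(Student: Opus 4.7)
The plan is to reduce both statements to the structure of the Artinian quotient $C := B/(l) = S/(l, f_1, \ldots, f_n)$. Since $l$ is regular on $B$, the sequence $l, f_1, \ldots, f_n$ is regular in $S$, so $C$ is an Artinian complete intersection and in particular Gorenstein. Its Hilbert series is the product $\prod_{i=1}^n (1 + t + \cdots + t^{d_i-1})$, giving socle degree $q = \sum_{i=1}^n (d_i - 1)$, and the Gorenstein structure supplies a perfect pairing $C_i \times C_{q-i} \to C_q \cong K$.

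Next I would exploit that multiplication by $f_0$ is self-adjoint for this pairing: for $x \in C_{p-d_0}$ and $y \in C_{q-p}$, both $\langle f_0 x, y\rangle$ and $\langle x, f_0 y\rangle$ equal the class of $f_0 x y$ in $C_q$. Hence, under the duality isomorphisms $C_j \cong C_{q-j}^{*}$, the transpose of $f_0 \cdot : C_{p-d_0} \to C_p$ is identified with $f_0 \cdot : C_{q-p} \to C_{q-p+d_0}$. Since a linear map of finite-dimensional vector spaces is surjective if and only if its transpose is injective, statements (1) and (2) are equivalent.

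For the final assertion I would observe that $C/(f_0) = S/(l, f_0, f_1, \ldots, f_n) = S/J'$, so $L$ is precisely the Hilbert series of $C/(f_0)$. When $p > \deg L$ the space $(C/(f_0))_p$ vanishes, which translates directly into $f_0 \cdot C_{p-d_0} = C_p$, i.e., (1) holds; the equivalence then delivers (2) as well.

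The only step requiring care is the degree bookkeeping for the Gorenstein duality: one must verify that the transpose of multiplication by $f_0$ starting in degree $p-d_0$ lands precisely in the degree pair $(q-p,\, q-p+d_0)$ appearing in (2). Once this is pinned down, no further calculation is required, and the proposition follows.
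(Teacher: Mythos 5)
Your proof is correct and follows essentially the same route as the paper: both arguments rest on the Gorenstein duality of the Artinian complete intersection $B/(l)$ of socle degree $q$, under which multiplication by $f_0$ is self-adjoint, so that the map in (1) is the transpose of the map in (2). Your treatment of the final claim via $(S/J')_p=(C/f_0C)_p=0$ for $p>\deg L$ is also the intended one.
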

 \begin{proof}
Since $B/(l)$ is an Artinian Gorenstein ring it is enough  to see that  $\nu_p:(B/(l))_{q-p}\xrightarrow{f_0} (B/(l))_{q-p+d_0}$ is given by duality by $\nu_{p-d_0}$.
More precisely, the canonical isomorphisms $\varphi_p:(B/(l))_{q-p}\to \Hom_K((B/(l))_p,(B/(l))_q)$, $0\leq p\leq q$ which maps $z\in (B/(l))_{q-p}$ in the map $(B/(l))_p\xrightarrow{z} (B/(l))_q$ make commutative the following diagram
\begin{displaymath}
\xymatrixcolsep{10pc}\xymatrix{
(B/(l))_{q-p} \ar[d] \ar[r]^{\nu_p} & (B/(l))_{q-p+d_0} \ar[d] \\
\Hom_K((B/(l))_{p},(B/(l))_{q}) \ar[r]^{\Hom_K(\nu_{p-d_0},(B/(l))_{q})}  & \Hom_K((B/(l))_{p-d_0},(B/(l))_{q})
}
\end{displaymath}
the vertical maps being $\varphi_p$ and $\varphi_{p-d_0}$.
\end{proof}

It remains to study the integer $\deg L$.

\begin{Proposition}\label{p2} Let  ${\tilde W}=(l,W)$ and suppose that the following assumption holds
$$(A)   \  \  \text {   there exists an integer } k, \   0\leq k<q  \text{ such that } (f_0)_{d_0+k}\cap {\tilde W}_{d_0+k}=f_0{\tilde W}_k.$$
Then $\deg L\leq q-k-1$.
\end{Proposition}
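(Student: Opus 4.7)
The plan is to set $A = S/\tilde W$. Since $\tilde W = (l,f_1,\ldots,f_n)$ is generated by a regular sequence of forms of degrees $1,d_1,\ldots,d_n$, the ring $A$ is an Artinian Gorenstein complete intersection whose socle lives in degree $q=\sum_{i=1}^{n}(d_i-1)$. Since $S/J' \cong A/f_0A$, the bound $\deg L\leq q-k-1$ is equivalent to the vanishing $(S/J')_p = 0$ for every $p\geq q-k$, which in turn amounts to the surjectivity of the multiplication map $\mu_p\colon A_{p-d_0}\to A_p$, $z\mapsto f_0z$, for every such $p$.

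The first step is to reinterpret hypothesis $(A)$. The equality $(f_0)_{d_0+k}\cap \tilde W_{d_0+k} = f_0\tilde W_k$ says precisely that $f_0a\in\tilde W$ (with $a\in S_k$) forces $a\in \tilde W$, i.e.\ $\ker \mu_{k+d_0}=0$ viewed as a subspace of $A_k$. Exploiting the Gorenstein self-duality of $A$---that is, the socle pairing $A_i\times A_{q-i}\to A_q\cong K$ already used in the commutative diagram in the proof of Proposition \ref{p1}---the adjoint of the injective map $A_k\xrightarrow{f_0} A_{k+d_0}$ is the multiplication map $A_{q-k-d_0}\xrightarrow{f_0} A_{q-k}$. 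Hence this latter map is surjective, giving the base case $A_{q-k} = f_0 A_{q-k-d_0}$.

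The second step is to propagate this surjectivity to all higher degrees by induction. Since $A$ is a standard graded $K$-algebra, $A_{p+1} = A_1\cdot A_p$ for every $p$. Assuming inductively that $A_p = f_0 A_{p-d_0}$, one obtains
$$A_{p+1} = A_1\cdot f_0 A_{p-d_0} = f_0\cdot (A_1 A_{p-d_0})\subseteq f_0 A_{p+1-d_0},$$
while the reverse inclusion is trivial. Starting from $p=q-k$, this yields $A_p = f_0 A_{p-d_0}$ for every $p\geq q-k$, hence $(S/J')_p = 0$ in that range and $\deg L\leq q-k-1$.

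The only delicate point is the duality identification of the adjoint of multiplication by $f_0$ under the socle pairing on $A$. But this is exactly the diagram already worked out in the proof of Proposition \ref{p1}, so no new obstacle arises; everything else is bookkeeping with standard-gradedness and an immediate induction.
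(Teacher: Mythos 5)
Your proof is correct and follows essentially the same route as the paper: hypothesis $(A)$ is read as injectivity of multiplication by $f_0$ on $(S/{\tilde W})_k$, the Gorenstein self-duality of $S/{\tilde W}$ (i.e.\ Proposition \ref{p1}) converts this into surjectivity of $f_0\colon (S/{\tilde W})_{q-k-d_0}\to (S/{\tilde W})_{q-k}$, and hence $(S/J')_{q-k}=0$. The only difference is that you make explicit the standard-gradedness induction showing $(S/J')_p=0$ for all $p\geq q-k$, a step the paper leaves implicit when it concludes $\deg L<q-k$.
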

\begin{proof}
The image of the map $\alpha_k:(B/(l))_k\xrightarrow{f_0} (B/(l))_{k+d_0}$ is isomorphic with  $((f_0)/(f_0)\cap {\tilde W})_{d_0+k}\cong ( S/{\tilde W})_k=(B/(l))_k$ using our hypothesis. Therefore $\alpha_k $ is injective and by Proposition \ref{p1} we get $\alpha_{q-k-d_0}$ surjective, that is $(S/(J,l))_{q-k}=(B/(f_0,l))_{q-k}=0$ and so $\deg L<q-k$.
\end{proof}

\begin{Corollary} If $f_0\not \in (l,f_1,\ldots,f_n)$ then $\deg L<q$.
\end{Corollary}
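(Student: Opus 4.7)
The plan is to apply Proposition \ref{p2} directly with $k=0$. I need to verify that the hypothesis $f_0 \notin (l,f_1,\ldots,f_n) = \tilde{W}$ implies the assumption $(A)$ at $k=0$, namely
\[
(f_0)_{d_0} \cap \tilde{W}_{d_0} = f_0 \tilde{W}_0.
\]
The right-hand side is zero: since $\tilde{W}$ is generated by $l$ together with $f_1,\ldots,f_n$, all of positive degree, we have $\tilde{W}_0 = 0$, hence $f_0 \tilde{W}_0 = 0$. The left-hand side simplifies because $f_0$ is itself homogeneous of degree $d_0$, so $(f_0)_{d_0} = K f_0$, a one-dimensional $K$-space. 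Thus $(A)$ at $k=0$ becomes the condition $K f_0 \cap \tilde{W}_{d_0} = 0$, which (since $f_0$ is homogeneous of degree $d_0$) is equivalent to $f_0 \notin \tilde{W}_{d_0}$, and in turn to $f_0 \notin \tilde{W} = (l,f_1,\ldots,f_n)$.

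Once this translation is in place, Proposition \ref{p2} yields $\deg L \leq q - 0 - 1 = q - 1 < q$, which is the conclusion. There is no real obstacle here; the corollary is an immediate specialization of the previous proposition, and the only thing to notice is that at $k=0$ the assumption $(A)$ reads exactly as the non-membership condition on $f_0$. In terms of the underlying map $\alpha_0 \colon (B/(l))_0 \xrightarrow{f_0} (B/(l))_{d_0}$ used in the proof of Proposition \ref{p2}, this simply says that the image of $1$ is nonzero, which is of course the most elementary instance of injectivity for $\alpha_k$.
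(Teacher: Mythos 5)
Your proof is correct and is exactly the paper's argument: the authors also prove this corollary by applying Proposition \ref{p2} with $k=0$, and your verification that assumption $(A)$ at $k=0$ reduces to $f_0\notin \tilde{W}$ (since $\tilde{W}_0=0$ and $(f_0)_{d_0}=Kf_0$) is the correct, if tacit, content of that one-line reduction.
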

For the proof apply the above proposition for $k=0$.

\begin{Theorem} \label{m1} The following statements hold.
\begin{enumerate}
\item{} For any integer $p$, the morphism
 $(B/(l))_{p-d_0}\xrightarrow{f_0} (B/(l))_p$ is surjective if and only if the morphism $(S/J)_{p-1}\xrightarrow{l} (S/J)_p$ is surjective. This surjectivity holds for all $p\geq q-k$ if the assumption $(A)$ above is true.
 \item{} For any integer $r$, if
 the morphism $(B/(l))_{r}\xrightarrow{f_0} (B/(l))_{r+d_0}$ is injective, then the morphism $(S/J)_{r+d_0-1}\xrightarrow{l} (S/J)_{r+d_0}$ is also injective. This injectivity holds for all $r\leq k$ or $r>q$ if the assumption $(A)$ above is true.
 \end{enumerate}
 \end{Theorem}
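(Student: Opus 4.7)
The plan is to derive both statements from a single snake-lemma computation, and then invoke Propositions \ref{p1} and \ref{p2} for the range assertions.

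Since $l$ is regular on $B=S/(f_1,\ldots,f_n)$, we have the short exact sequence
$$0 \to B(-1) \xrightarrow{l} B \to B/(l) \to 0.$$
I would stack a $(-d_0)$-shift of this sequence on top of itself with vertical maps given by multiplication by $f_0$, producing a commutative diagram with two exact rows. The snake lemma then yields, in each degree $p$, the exact sequence
$$(0:_{B/(l)} f_0)_{p-d_0} \xrightarrow{\delta} (S/J)_{p-1} \xrightarrow{l} (S/J)_p \to (S/J')_p \to 0,$$
using the identifications $B/f_0 B = S/J$ and $(B/(l))/f_0(B/(l)) = S/(l,J) = S/J'$.

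Both equivalences now drop out. For (1), the map $l : (S/J)_{p-1} \to (S/J)_p$ is surjective iff $(S/J')_p = 0$, and the latter is equivalent to $f_0 : (B/(l))_{p-d_0} \to (B/(l))_p$ being surjective (both amount to vanishing of $(B/(l))/(f_0)$ in degree $p$). For (2), if $\alpha_r := f_0 : (B/(l))_r \to (B/(l))_{r+d_0}$ is injective then $(0:_{B/(l)} f_0)_r = 0$, so $\delta$ in the above sequence (at $p = r+d_0$) vanishes; exactness then forces $l : (S/J)_{r+d_0-1} \to (S/J)_{r+d_0}$ to be injective.

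For the range assertions under assumption $(A)$, I would invoke the earlier propositions. Proposition \ref{p2} gives $\deg L \leq q-k-1$, whence $(S/J')_p = 0$ for all $p \geq q-k$; this handles (1). For (2), the proof of Proposition \ref{p2} shows $\alpha_k$ is injective, which by Proposition \ref{p1} is equivalent to $\alpha_{q-k}$ being surjective. Since $S/J'$ is standard graded Artinian, once $(S/J')_p$ vanishes it continues to vanish (because $(S/J')_{p+1} = (S/J')_1 \cdot (S/J')_p$), so $\alpha_p$ is surjective for every $p \geq q-k$; applying Proposition \ref{p1} once more converts this into injectivity of $\alpha_r$ for every $r \leq k$. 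The case $r > q$ is trivial, since $(B/(l))_r = 0$ there.

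I do not anticipate a real obstacle: the whole argument reduces to setting up the snake lemma correctly and reading off two consecutive pieces of the resulting six-term sequence. The only mildly delicate point is the monotonicity observation used to spread surjectivity of $\alpha_{q-k}$ upward to all $p \geq q-k$, but this is immediate from the standard gradedness of $S/J'$.
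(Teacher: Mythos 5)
Your proposal is correct, and it takes the homological route that the paper explicitly gestures at but does not carry out. The paper proves part (1) by observing that both surjectivities are equivalent to $J'_p=S_p$, and for the implication in part (2) it remarks that it ``follows easily from Koszul homology'' before giving a hand-made element chase ($l\omega\equiv f_0h_0$ modulo $W$, then $h_0\equiv l\gamma_0$, then cancel $l$ using its regularity on $B$). Your snake-lemma diagram built on $0\to B(-1)\xrightarrow{l}B\to B/(l)\to 0$ with vertical multiplication by $f_0$ is exactly that Koszul-homology argument made explicit, and it has the small advantage of delivering both equivalences from the single exact sequence $(0:_{B/(l)}f_0)_{p-d_0}\to (S/J)_{p-1}\xrightarrow{l}(S/J)_p\to (S/J')_p\to 0$: part (1) is the vanishing of the last term, part (2) the vanishing of the first. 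The degree bookkeeping in your sequence is right, and the identifications $B/f_0B=S/J$ and $(B/(l))/f_0(B/(l))=S/J'$ are the ones the paper uses. The range assertions are handled as in the paper: Proposition \ref{p2} gives $\deg L\leq q-k-1$, hence surjectivity for $p\geq q-k$, and Proposition \ref{p1} dualizes this to injectivity for $r\leq k$; your monotonicity remark about standard graded Artinian quotients is correct though not strictly needed, since $\deg L\leq q-k-1$ already kills $(S/J')_p$ for all $p\geq q-k$ at once. One cosmetic slip: with the paper's convention $\alpha_j\colon (B/(l))_j\to (B/(l))_{j+d_0}$ (source-degree indexing), injectivity of $\alpha_k$ corresponds via Proposition \ref{p1} to surjectivity of $\alpha_{q-k-d_0}$, not of $\alpha_{q-k}$; your later sentences implicitly switch to indexing surjections by their target degree, which is where the argument is actually carried out correctly, so nothing breaks.
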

\begin{proof} The first claim in (1) follows from the fact that both surjectivities are equivalent to
the equality $J'_p=S_p$.
Apply then Proposition \ref{p1} and the proof of Proposition \ref{p2}. If $r>q$ then $(B/(l))_r=0$ and the second part of (2) is trivial. To prove the first part of (2), it is enough to show that if $(B/(l))_{e-d_0}\xrightarrow{f_0} (B/(l))_e$ is injective for some $e\in {\NN}$ then  $(S/J)_{e-1}\xrightarrow{l} (S/J)_e$ is also injective. This follows easily from Koszul homology. Here we will give a simple direct proof.

Let $\omega\in S_{e-1}$ be such that $l\omega\in (J)_e$, that is $l\omega\equiv f_0h_0$ modulo  $W$
for some $h_0\in S_{e-d_0}$. Then $h_0\in {\tilde W}_{e-d_0}$ by injectivity assumption, let us say $h_0\equiv l\gamma_0$ modulo $W$ for some $\gamma_0\in S_{e-d_0-1}$. It follows that $l (\omega-f_0\gamma_0)\in W$ and so $\omega\equiv f_0\gamma_0$ modulo $W$ because $l $ is regular on $B$. Thus $\omega\in J_{e-1}$, which is enough.
\end{proof}

\begin{Remark} {\em The converse implication in (2) does not hold in general, see Example \ref{Ekey}. The above theorem works for any linear form $l$ regular on $B$ with $f_0\not\in {\tilde W}$. But it is important to find such $l$ with $(f_0)_{d_0+k}\cap {\tilde W}_{d_0+k}=f_0{\tilde W}_k$ for high $k$. Clearly the highest such $k$ is smaller then $v$ given by Proposition \ref{inj} but in general is strictly smaller   as shows the following example.}
\end{Remark}

\begin{Example} {\em Let $n=1$, $f_0=x_0x_1$, $f_1=x_1^{d_1}$ for some $d_1>2$. We may take $l=x_0$ because $x_0$ is regular on $B=S/(f_1)$ but then $f_0\in {\tilde W}=(l,f_1)$ and there exist no $k$ as above. However for $l=x_0-x_1$ we see that $k=d_1-3$ works. Indeed, if $f_0g\in {\tilde W}$ for some nonzero homogeneous polynomial $g$ of degree $k$ then $f_0g$ has degree $d_1-1$ and induces zero in $S/{\tilde W}\cong K[x_1]/(x_1^{d_1})$, which is false. It is easy to see that $k>d_1-3$ does not work. Applying Proposition we have $v=s-\deg F==d_1-2$ since $F=1$ and so $k=v$ does not work.}
\end{Example}

The idea of this example gives easily the following corollary.
\begin{Corollary} \label{n=1} Let $n=1$ and $f_i\in S_{d_i}$, $i=0,1$ be such that $\height (f_0,f_1)=1$ and $d_0<d_1$. Then there exists $l\in S_1$ regular on $S/(f_1)$ and such that
\begin{enumerate}
 \item{}
 the maps $(B/(l))_{r}\xrightarrow{f_0} (B/(l))_{r+d_0}$ and $(S/J)_{r+d_0-1}\xrightarrow{l} (S/J)_{r+d_0}$ are injective for all $r\leq d_1-d_0-1$ or $r>d_1-1$,

\item{}
 the maps $(B/(l))_{p-d_0}\xrightarrow{f_0} (B/(l))_p$ and $(S/J)_{p-1}\xrightarrow{l} (S/J)_p$ are surjective for all $p\geq d_0$.
\end{enumerate}
In particular, $l$ is a Lefschetz element for $S/(f_0,f_1)$.
\end{Corollary}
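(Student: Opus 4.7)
The plan is to mimic the preceding example: choose a generic linear form $l \in S_1$ and reduce everything to the one-variable truncated polynomial ring $K[x_1]/(x_1^{d_1})$. Since $K$ is infinite (characteristic zero) and $f_0f_1 \in S$ is a nonzero form, only finitely many scalar classes of linear forms divide $f_0f_1$, so we may choose $l \in S_1$ dividing neither $f_0$ nor $f_1$. Then $\gcd(l,f_1)=1$ in $S=K[x_0,x_1]$, so $l$ is a nonzerodivisor on $B=S/(f_1)$, and moreover $f_0 \notin (l,f_1)$. After a linear change of coordinates we may assume $l = x_0$.

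With this normalization, $B/(l)=S/(x_0,f_1)\cong K[x_1]/(x_1^{d_1})$ (since $f_1(0,x_1)$ is a nonzero scalar multiple of $x_1^{d_1}$), which is $1$-dimensional in each degree $0,1,\dots,d_1-1$ and zero otherwise. Moreover $f_0(0,x_1)=c_0 x_1^{d_0}$ with $c_0\neq 0$, so multiplication by $f_0$ on $B/(l)$ is simply multiplication by $c_0 x_1^{d_0}$. It is then routine to read off its behavior degree by degree: the map $(B/(l))_r \xrightarrow{f_0} (B/(l))_{r+d_0}$ is injective exactly when $r\leq d_1-d_0-1$ or $r>d_1-1$ (in the remaining range $d_1-d_0\leq r\leq d_1-1$ the source is a nonzero $1$-dimensional space but the target is zero), and the map $(B/(l))_{p-d_0}\xrightarrow{f_0}(B/(l))_p$ is surjective for all $p\geq d_0$ (the target is either zero or a nonzero $1$-dimensional space onto which the nonzero source maps nontrivially). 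This settles the $f_0$-halves of parts (1) and (2).

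The $l$-halves then follow directly from Theorem \ref{m1}: its part (2) lifts injectivity on $B/(l)$ to injectivity on $S/J$, and its part (1) gives the equivalence of the two surjectivities. For the final Lefschetz assertion I combine the two ranges: by part (1) the map $(S/J)_i\xrightarrow{l}(S/J)_{i+1}$ is injective for $i\leq d_1-2$, and by part (2) it is surjective for $i\geq d_0-1$. Since $d_0<d_1$ forces $d_0-1\leq d_1-2$, these two ranges together cover every integer $i$, so $l$ has maximal rank in every degree and is therefore a Lefschetz element for $S/(f_0,f_1)$.

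The only delicate step is the choice of $l$: once $l$ divides neither $f_0$ nor $f_1$, the entire analysis collapses to multiplication by a pure monomial on a truncated polynomial ring in one variable, and no serious obstacle remains. The main piece of bookkeeping worth emphasizing is the identification of the exceptional range $d_1-d_0\leq r\leq d_1-1$, where the source of the $f_0$-map is nonzero but the target vanishes; this is precisely the reason the injectivity range in part (1) is split into two pieces rather than a single interval.
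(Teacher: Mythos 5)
Your proposal is correct and follows essentially the same route as the paper: both arguments choose $l$ so that, after a coordinate change, $S/(l,f_1)\cong K[x_1]/(x_1^{d_1})$ and $f_0$ acts as a nonzero multiple of $x_1^{d_0}$, which yields the key exponent $k=d_1-d_0-1$, and both then transfer the conclusions to $S/J$ via Theorem \ref{m1}. The only cosmetic difference is that you verify the injectivity and surjectivity of the $f_0$-maps on $B/(l)$ by direct inspection of the truncated polynomial ring, whereas the paper obtains the surjective range from the injective one through the Gorenstein duality of Proposition \ref{p1}; in this one-variable situation the two computations are interchangeable.
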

\begin{proof}
As in the above example we see that one can find $l$  for which  $k=d_1-d_0-1$ works and we may apply Theorem \ref{m1}. Then note that  $(S/J)_{p-1}\xrightarrow{l} (S/J)_p$ is  injective for $p\leq d_1-1$ and surjective for $p\geq d_0$. As $d_0\leq d_1-1$ we see that  $l$ is a Lefschetz element for $S/(f_0,f_1)$.
\end{proof}

\begin{Remark} {\em In particular \cite[Proposition 4.4]{HMNW} says that if $A$ is a graded Artinian factor of $K[x_0,x_1]$, then it has a Lefschetz element in $A_1$. This result holds somehow also in the case $\dim A=1$ as shows our above corollary.}
\end{Remark}

\section{The case of projective hypersurfaces with isolated singularities}

 Let $V\subset {\PP}^n$ be a projective hypersurface defined by a homogeneous polynomial $f\in S$ of degree $d\geq 2$ and assume $n\geq 2$. Let $f_0,\ldots,f_n$ be its partial derivatives, $J_f=(f_0,\ldots,f_n)$ be the Jacobian ideal of $f$ and  $M(f)=S/J_f$ be the Milnor algebra of $f$. Let $\mathbf{m}=(x_0,\ldots,x_n)$ and $I=(J_f:\mathbf{m}^{\infty})$ be the saturation of $J_f$.

Assume that $V$ has only isolated singularities, i.e.  $\dim S/J_f=1$. Then, maybe after a coordinate change, we can assume that $\{f_1,\ldots f_n\}$ is a regular sequence.

Recall that the {\em stability threshold} $st(V)$ of $V$ was defined in \cite{DSt} as $$st(V)=\min\{p:\dim_K M(f)_k=\tau(V) \mbox{ for\ \ all}\ \ k\geq p\},$$
where $\tau(V)$ is the total Tjurina number of $V$.
Moreover, the {\it coincidence threshold} $ct(V)$ was defined as
$$ct(V)=\max \{q~~:~~\dim_K M(f)_k=\dim M(f_s)_k \text{ for all } k \leq q\},$$
with $f_s$  a homogeneous polynomial in $S$ of degree $d$ such that $V_s:f_s=0$ is a smooth hypersurface in $\PP^n$. Finally, the {\it minimal degree of a nontrivial relation} $mdr(V)$ is defined as
$$mdr(V)=\min \{q~~:~~ ER({\bf f})_{q-d+1}\ne 0\}.$$
It is known that one has
\begin{equation} 
\label{REL}
ct(V)=mdr(V)+d-2,
\end{equation} 
see \cite{DSt}, formula (1.3).

As in the first section note that $H_{M(f)}(t)$ has the form $G/(1-t)$, where $G\in {\ZZ}[t]$ with $G(1)\not=0$, $G(1) $ being the multiplicity of $M(f)$ (see \cite[Proposition 4.1.9]{BH}). We have the following result.

\begin{Lemma} \label{L} With the above notation,  the following hold.

\begin{enumerate}
\item{} $\tau(V)=G(1)$ is the multiplicity of $M(f)$ and $\deg G=st(V)$ is the stability threshold of $V$.

\item{} $\tau(V)=F(1)$ is the multiplicity of $S/I$ and $\deg F= (n+1)(d-2)-ct(V)$.

\end{enumerate}
\end{Lemma}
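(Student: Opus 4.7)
The four assertions of the lemma split naturally into two multiplicity identifications and two degree identifications, and I would treat them in that order. For the multiplicities, applying Lemma \ref{l}(1,2) immediately yields that $G(1) = F(1)$ is the common multiplicity of $M(f)$ and $S/I$, realized as the eventual constant value of the Hilbert function $\dim_K M(f)_k$ for $k \gg 0$. To identify this quantity with $\tau(V)$, I would localize $M(f)$ at each singular point $p \in \Sing(V)$: the local ring $M(f)_p$ is Artinian of length equal to the local Tjurina number, and summing these local lengths gives the classical identification $\tau(V) = G(1) = F(1)$.

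The identification $\deg G = st(V)$ is then a direct consequence of Lemma \ref{l}(4), which says $\deg G$ is the smallest integer $p$ with $\dim_K(S/J_f)_j = G(1)$ for all $j \geq p$. Since $G(1) = \tau(V)$, this is precisely the definition of $st(V)$.

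The main computation is $\deg F = (n+1)(d-2) - ct(V)$. I would specialize Theorem \ref{main} to the Milnor setting $d_0 = \cdots = d_n = d-1$. Using the factorization $1 - t^{d-1} = (1-t)(1 + t + \cdots + t^{d-2})$, the theorem yields
$$G = (1-t)(1 + t + \cdots + t^{d-2})^{n+1} + t^{(n+1)(d-2)+1} F(1/t),$$
so dividing by $1-t$,
$$H_{M(f)}(t) = (1 + t + \cdots + t^{d-2})^{n+1} + \frac{t^{(n+1)(d-2)+1} F(1/t)}{1-t}.$$
For a smooth hypersurface $V_s : f_s = 0$ of degree $d$, $M(f_s)$ is an Artinian complete intersection of type $(d-1, \ldots, d-1)$, hence $H_{M(f_s)}(t) = (1 + t + \cdots + t^{d-2})^{n+1}$. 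Subtracting isolates
$$H_{M(f)}(t) - H_{M(f_s)}(t) = \frac{t^{(n+1)(d-2)+1} F(1/t)}{1-t}.$$
Writing $F = \sum_{i=0}^{\deg F} w_i t^i$ with $w_{\deg F} \neq 0$, the polynomial $t^{(n+1)(d-2)+1} F(1/t)$ has its lowest nonzero monomial in degree $(n+1)(d-2) + 1 - \deg F$; multiplication by the geometric series $1/(1-t)$ preserves this lowest degree. Therefore the largest $q$ for which $\dim_K M(f)_k = \dim_K M(f_s)_k$ for all $k \leq q$ is exactly $q = (n+1)(d-2) - \deg F$, which gives the stated formula.

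The only delicate point I anticipate is confirming that the leading coefficient $w_{\deg F}$ of $F$ has the correct sign, so that the first discrepancy is genuinely nonvanishing rather than accidentally zero. This follows from the semicontinuity inequality $\dim_K M(f)_k \geq \dim_K M(f_s)_k$ in each degree $k$, which forces the coefficients of $H_{M(f)}(t) - H_{M(f_s)}(t)$ to be nonnegative and in particular forces $w_{\deg F} > 0$. Alternatively, one may bypass the semicontinuity step entirely by combining Corollary \ref{ident} (which gives the minimal relation degree in terms of $\deg F$) with the formula \eqref{REL}, thereby computing $ct(V)$ algebraically without explicit reference to the smooth comparison.
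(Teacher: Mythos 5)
Your argument is correct, and for the only nontrivial part it takes a genuinely different route from the paper. For assertion (1) and the identity $\tau(V)=F(1)$ you do essentially what the paper does (Lemma \ref{l} plus the classical localization identifying the eventual value of the Hilbert function with the sum of local Tjurina numbers). The difference is in the formula $\deg F=(n+1)(d-2)-ct(V)$: the paper simply cites \cite[Proposition 2]{D}, whereas you derive it internally by specializing Theorem \ref{main} to $d_0=\cdots=d_n=d-1$, subtracting the smooth Hilbert series $(1+t+\cdots+t^{d-2})^{n+1}$, and reading off the valuation of the difference $t^{(n+1)(d-2)+1}F(1/t)/(1-t)$. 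This is a clean, self-contained proof that the paper does not write out, and it is consonant with the paper's stated aim of giving algebraic arguments for facts previously obtained in the Milnor-algebra literature. One remark: the ``delicate point'' you flag is not actually delicate. The coefficient of the lowest-degree term of $t^{(n+1)(d-2)+1}F(1/t)\cdot(1+t+t^2+\cdots)$ is exactly the leading coefficient $w_{\deg F}$ of $F$ (a product of valuations, with no cancellation possible), and this is nonzero by the definition of $\deg F$; so no semicontinuity or positivity input is needed, although your positivity argument is of course also valid (and follows algebraically from $F=H_{S/(I,\ell)}(t)$ for $\ell$ a linear nonzerodivisor on the Cohen--Macaulay ring $S/I$). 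Finally, your proposed alternative via Corollary \ref{ident} and formula \eqref{REL} is precisely the observation the paper records in Remark \ref{rk2}, so that route is equally legitimate.
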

\begin{proof} We have $\dim_KM(f)_k=G(1)$ for all $k\geq \deg G$ and so $\tau(V)=G(1)$, $st(V)\leq \deg G$. In fact $st(V)=\deg(G)$ since $\dim_KM(f)_{\deg G-1}=G(1)-LC(G)$, where $LC(G)$ is the leading coefficient of $G$. The first part of the second claim follows from Lemma \ref{l}, while the second part is a consequence of \cite[Proposition 2]{D}.
\end{proof}

\begin{Remark} \label{rk1}{\em In the notation from Lemma \ref{l}, one has $s=n(d-1)-n-1$, $s+1+d_0=(n+1)(d-2)+1$ and Lemma \ref{l}, (4) and Lemma \ref{big} imply $st(V) \leq (n+1)(d-2)+1$, which is sharp for $V$ smooth, but far from sharp when $V$ has isolated singularities, see \cite{DSt}.}
\end{Remark}

 \begin{Remark} \label{rk2}{\em The last claim in Corollary \ref{ident} tells us that $\deg F =(n+1)(d-2)-(m-1)$, which is clearly equivalent to the last claim in Proposition 2 in \cite{D}. To see this, use the formula \eqref{REL} and note the twist in the definition of the degree of a relation in the first and the third section.  }
 \end{Remark}

Let $l$ be a  linear form which is   regular  $B=S/(f_1,\ldots,f_n)$. As in the previous section set $J'=(J_f,l)$, $L=H_{S/J'}(t)\in {\ZZ}[t]$. We have $\deg L\leq q=n(d-2)$.
The following theorem follows from Theorem \ref{m1}.
\begin{Theorem} \label{m2} Suppose that there exists $k$,   $0\leq k<q=n(d-2)$ such that
 $(f_0)_{d+k-1}\cap {\tilde W}_{d+k-1}=f_0{\tilde W}_k$, where ${\tilde W}=(l,f_1,\ldots,f_n)$. Then the following statements holds:
\begin{enumerate}
\item{}
 the maps $(B/(l))_{p-d+1}\xrightarrow{f_0} (B/(l))_p$ and $M(f)_{p-1}\xrightarrow{l} M(f)_p$ are surjective for all $p\geq q-k$,
 \item{}
 the maps $(B/(l))_{r}\xrightarrow{f_0} (B/(l))_{r+d-1}$ and $M(f)_{r+d-2}\xrightarrow{l} M(f)_{r+d-1}$ are injective for all $r\leq k$ or $r>q$.
 \end{enumerate}
 \end{Theorem}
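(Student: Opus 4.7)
The plan is to deduce Theorem~\ref{m2} as a direct specialization of Theorem~\ref{m1} to the Jacobian setting. Since $V\subset\PP^n$ has isolated singularities and $f_0,\ldots,f_n$ are the partial derivatives of $f$, we have $\deg f_i = d-1$ for every $i$, in particular $d_0 = d-1$, and
$$q \;=\; \sum_{i=1}^n (d_i-1) \;=\; n(d-2),$$
which agrees with the $q$ appearing in the statement (and with $s+1$ for $s = n(d-1)-n-1$ from Section~1). In particular $d_0 + k = d+k-1$, so the hypothesis $(f_0)_{d+k-1}\cap \tilde W_{d+k-1} = f_0\tilde W_k$ of Theorem~\ref{m2} is precisely assumption~(A) of Proposition~\ref{p2}. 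Hence all hypotheses of Theorem~\ref{m1} are in force with $J = J_f$.

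For claim~(1), I would invoke Theorem~\ref{m1}(1): under assumption~(A), the multiplication map $(B/(l))_{p-d_0}\xrightarrow{f_0}(B/(l))_p$ is surjective for every $p \geq q-k$, and the first assertion of that theorem says this is equivalent to the surjectivity of $(S/J_f)_{p-1}\xrightarrow{l}(S/J_f)_p$. Substituting $d_0 = d-1$ and recalling that $M(f) = S/J_f$ converts these into the two surjectivities asserted in~(1).

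For claim~(2), I would invoke Theorem~\ref{m1}(2): under assumption~(A), the map $(B/(l))_r\xrightarrow{f_0}(B/(l))_{r+d_0}$ is injective for all $r \leq k$ (with the range $r > q$ being trivial since $(B/(l))_r = 0$ there), and by the second assertion of that theorem this injectivity implies that of $(S/J_f)_{r+d_0-1}\xrightarrow{l}(S/J_f)_{r+d_0}$. Replacing $d_0$ by $d-1$ yields exactly the two injectivities of~(2), for the indicated ranges of $r$.

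No genuine obstacle arises, since Theorem~\ref{m2} is just a translation of Theorem~\ref{m1} into the Jacobian framework; the only care needed is index bookkeeping, i.e.\ verifying $d_0+k = d+k-1$ and $q = n(d-2)$, as recorded above.
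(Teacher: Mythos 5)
Your proposal is correct and coincides with the paper's own treatment: the paper simply states that Theorem~\ref{m2} follows from Theorem~\ref{m1}, and your index bookkeeping ($d_0=d-1$, $q=n(d-2)$, so that the hypothesis is exactly assumption~(A)) is precisely the verification needed to make that specialization explicit.
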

 \begin{Remark} {\em After a change of coordinates we may suppose that $x_0=l$
  in the above theorem, using the next elementary lemma.}
  \end{Remark}
\begin{Lemma} \label{ele}
Let $l=\sum_{i=0}^n b_ix_i$, $b_i\in K$, $b_0\not =0$ be a linear form of $S$ and ${\varphi }$ be the $K$-automorphism of $S$ given by $x_0\to l$ and $x_i\to x_i$ for $1\leq i\leq n$. Then one has
 $\varphi(J_f)=J_{{\varphi}(f)}$.
\end{Lemma}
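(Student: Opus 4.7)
The plan is a direct chain rule computation, using that $\varphi(f)(x_0,x_1,\ldots,x_n) = f(l,x_1,\ldots,x_n)$ where $l = \sum_{i=0}^n b_i x_i$. First I would compute the partial derivatives of $\varphi(f)$: by the chain rule
\[
\frac{\partial \varphi(f)}{\partial x_0} = b_0 \cdot f_0(l, x_1, \ldots, x_n) = b_0\, \varphi(f_0),
\]
and for $1 \leq j \leq n$,
\[
\frac{\partial \varphi(f)}{\partial x_j} = b_j \cdot f_0(l, x_1, \ldots, x_n) + f_j(l, x_1, \ldots, x_n) = b_j\, \varphi(f_0) + \varphi(f_j).
\]

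These formulas immediately give both inclusions. On the one hand, every generator of $J_{\varphi(f)}$ is a $K$-linear combination of $\varphi(f_0), \varphi(f_1), \ldots, \varphi(f_n)$, so $J_{\varphi(f)} \subseteq \varphi(J_f)$. On the other hand, since $b_0 \neq 0$, the first equation yields $\varphi(f_0) = b_0^{-1} \partial \varphi(f)/\partial x_0 \in J_{\varphi(f)}$, and then each $\varphi(f_j) = \partial \varphi(f)/\partial x_j - b_j \varphi(f_0) \in J_{\varphi(f)}$ for $j \geq 1$. Hence $\varphi(J_f) \subseteq J_{\varphi(f)}$, giving equality.

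There is essentially no obstacle here; the only thing to verify carefully is the application of the chain rule, which is where the hypothesis $b_0 \neq 0$ enters (ensuring invertibility of the $(n+1) \times (n+1)$ matrix relating $\{\varphi(f_i)\}$ to $\{\partial \varphi(f)/\partial x_i\}$, which is triangular with diagonal $(b_0, 1, \ldots, 1)$).
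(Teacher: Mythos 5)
Your proof is correct and follows essentially the same route as the paper: the chain rule gives $\partial\varphi(f)/\partial x_0=b_0\varphi(f_0)$ and $\partial\varphi(f)/\partial x_j=b_j\varphi(f_0)+\varphi(f_j)$ for $j\geq 1$, and the triangular change of basis with $b_0\neq 0$ yields both inclusions. Your write-up just makes the two inclusions explicit where the paper leaves them implicit.
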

\begin{proof} We have $\partial{\varphi}(f)/\partial x_i=a_i{\varphi}(\partial f/\partial x_o)+{\varphi}(\partial f/\partial x_i)$ for $1\leq i\leq r$
and \\
$\partial{\varphi}(f)/\partial x_0=a_0{\varphi}(\partial f/\partial x_o)$. Thus $\varphi(J_f)=J_{{\varphi}(f)}$ since $a_0\not=0$.
\end{proof}

\begin{Example}\label{Ekey} {\em Assume that $J_f=I$. In the case $n=2$ this corresponds exactly to the case when $V$ is a free divisor, see \cite[Remark 4.7]{DS14}. As concrete examples one can take $f=(x_0^2-x_1^2)(x_1^2-x_2^2)(x_0^2-x_2^2)$ or $f=(x_0^3-x_1^3)(x_1^3-x_2^3)(x_0^3-x_2^3)$. For many examples of (irreducible) free divisors see \cite{BC} and \cite{ST}.

When $J_f=I$ it follows from \cite[Proposition 2]{D} that $M(f)_{p-1}\xrightarrow{l} M(f)_p$ is surjective if and only if $p-1 \geq st(V)$. This follows from the fact that $M(f)=S/I$ can be regarded in this case as a free graded $K[l]$-module, see \cite[Remark 1.7]{DS}. This also implies that $M(f)_{p-1}\xrightarrow{l} M(f)_p$ is injective for any $p$. In view of Proposition \ref{p1}, this shows that the implication in Theorem \ref{m1} (2) is not an equivalence.}

\end{Example}

Suppose moreover from now on that  the singular locus  $Y$ of $V$ is a $0$-dimensional complete intersection. Then $I=(g_1,\ldots,g_n)$ for some regular sequence $g_1,\ldots,g_n$ with $g_i\in S_{a_i}$ for some $a_i\in {\NN}$ and $Y=V(I)$. We have  $(J_f)_p=I_p$ for $p>>0$.

\begin{Lemma} \label{L2} With the above notation,  the following are equivalent.

\begin{enumerate}
\item{} $a_1=...=a_n=d-1.$
\item{} $f_0$ belongs to the $K$-vector space spanned
 by $f_1,...,f_n$.
\item{} $f_0$ belongs to the ideal spanned by $f_1,...,f_n$.
\item{} After a linear coordinate change, $f$ is a polynomial in $x_1,...,x_n$ only.
\end{enumerate}
In other words, $V$ is the cone over a smooth hypersurface in ${\PP}^{n-1}.$
\end{Lemma}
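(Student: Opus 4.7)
The plan is to prove the cycle $(1)\Rightarrow(3)\Rightarrow(2)\Rightarrow(4)\Rightarrow(2)$ (the last one to avoid relying on uniqueness a priori) and then close by $(4)\Rightarrow$ cone over smooth $\Rightarrow (1)$. The easiest step is $(2)\Leftrightarrow(3)$: since every $f_i$ has the same degree $d-1$, the homogeneous component $(f_1,\dots,f_n)_{d-1}$ coincides with the $K$-span $Kf_1+\dots+Kf_n$, so membership in the ideal in degree $d-1$ is the same as membership in the $K$-span. Note also that $f_1,\dots,f_n$ are $K$-linearly independent, since otherwise the ideal $(f_1,\dots,f_n)$ would be generated by $n-1$ elements and could not have height $n$, contradicting the regular sequence hypothesis.

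For $(1)\Rightarrow(3)$ I would invoke Theorem \ref{comput} to analyze the ideal $I=(g_1,\dots,g_n)$. If all $a_j=d-1$, then $I_{d-1}$ is spanned by $g_1,\dots,g_n$ and has dimension $n$, but $f_0,f_1,\dots,f_n\in I_{d-1}$ (they are all partial derivatives, hence in the saturation $I$, of degree $d-1$), giving $n+1$ vectors in an $n$-dimensional space, hence a nontrivial linear relation $\sum_{i=0}^n c_if_i=0$. If $c_0=0$ this would contradict the linear independence of $f_1,\dots,f_n$ noted above, so $c_0\ne 0$ and $(3)$ follows. For the converse $(3)\Rightarrow(1)$, assuming $J_f=(f_1,\dots,f_n)$ gives a complete intersection generated in degree $d-1$, hence $S/J_f$ is Cohen--Macaulay of dimension one, so $J_f$ is saturated and $I=(f_1,\dots,f_n)$. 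Then $I$ has no nonzero elements of degree below $d-1$, forcing $a_j\ge d-1$; combined with $a_j\le d_j=d-1$ from Theorem \ref{comput}(2), this gives $a_j=d-1$.

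For $(2)\Rightarrow(4)$, writing $f_0=\sum_{i=1}^n c_if_i$ with $c_i\in K$, I would apply the linear substitution $\psi\colon x_0\mapsto x_0,\ x_i\mapsto x_i-c_ix_0$ for $i\ge 1$. By the chain rule (or by the direct computation in Lemma \ref{ele} applied to the inverse change) one obtains $\partial\psi(f)/\partial x_0 = f_0-\sum c_if_i = 0$, so $\psi(f)$ is a polynomial in $x_1,\dots,x_n$ only. Conversely, for $(4)\Rightarrow(2)$, the chain rule shows that the partial derivatives of the transformed polynomial are an invertible $K$-linear combination of the original $\varphi(f_i)$'s; vanishing of $\partial\tilde f/\partial x_0$ translates, after applying $\varphi^{-1}$, into a nontrivial linear relation among $f_0,f_1,\dots,f_n$. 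The coefficient of $f_0$ in this relation cannot vanish, again by linear independence of $f_1,\dots,f_n$, so $(2)$ holds.

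Finally, the geometric rider is standard: if, after a linear change of coordinates, $f$ involves only $x_1,\dots,x_n$, then $V\subset\PP^n$ is the projective cone with vertex $(1:0:\dots:0)$ over $V'=\{f=0\}\subset\PP^{n-1}$. Such a cone has a line of singularities through the vertex over every singular point of $V'$, and the vertex itself is singular unless $V'$ is empty; so $\dim S/J_f=1$ forces $V'$ to be smooth (and nonempty), which is exactly the assertion that $V$ is a cone over a smooth hypersurface in $\PP^{n-1}$. I do not expect any serious obstacle: the subtlest point is ensuring $c_0\ne 0$ in the two directions that invoke linear changes of coordinates, and this is precisely where the linear independence of a regular sequence of forms of the same degree is used.
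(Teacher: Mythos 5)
Your proof is correct and complete. Note that the paper itself offers no argument here (the proof is declared ``obvious'' and left to the reader), so there is nothing to compare against; your write-up actually supplies the missing details. All the key points check out: the degree-$(d-1)$ piece of $(f_1,\dots,f_n)$ equals the $K$-span, giving $(2)\Leftrightarrow(3)$; the linear independence of $f_1,\dots,f_n$ (forced by the height-$n$ condition) is exactly what guarantees $c_0\neq 0$ in the two places you need it; $(1)\Rightarrow(3)$ via the dimension count in $I_{d-1}$ and $(3)\Rightarrow(1)$ via saturatedness of the complete intersection $(f_1,\dots,f_n)$ together with $a_j\le d_j=d-1$ from Theorem \ref{comput}(2) are both sound; and the chain-rule computations for $(2)\Leftrightarrow(4)$ are right. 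The only hypothesis worth flagging explicitly is $\chara K=0$ (stated in the introduction), which you use when concluding that $\partial\psi(f)/\partial x_0=0$ forces $\psi(f)\in K[x_1,\dots,x_n]$, and again via Euler's relation in the geometric rider.
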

\begin{proof} This proof is obvious and we leave it for the reader.
\end{proof}

Now we list some consequences of our results in the first section.

 \begin{Proposition} \label{CIcase}(Choudary-Dimca, \cite[Proposition 13]{CD}, Dimca \cite[Proposition 4]{D}) Then
the Hilbert Poincare series of $M(t)$ is given by $$H_{M(f)}(t)=(1/(1-t)^{n+1})[(1-t^{d-1})^{n+1}+t^{(n+1)(d-1)-\sum_{i=1}^n a_i}\ \ \Pi_{j=1}^n(1-t^{a_j})].$$
 \end{Proposition}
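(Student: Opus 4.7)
The plan is to derive this formula as a direct specialization of Theorem \ref{main} (in its refined Gorenstein form) combined with Theorem \ref{comput}(1). The key structural observations are: all $d_i$ equal $d-1$ (partial derivatives); the ideal $I=(g_1,\ldots,g_n)$ is a complete intersection, hence $S/I$ is Gorenstein; and by Theorem \ref{comput}(1) we have $F=\prod_{j=1}^n(1+t+\cdots+t^{a_j-1})$ with $\deg F=\sum_{j=1}^n a_j - n$.

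First I would compute the exponent in Theorem \ref{main}. With $d_0=d_1=\cdots=d_n=d-1$, one gets
$$\sum_{i=0}^n d_i - n - \deg F = (n+1)(d-1) - n - \Bigl(\sum_{j=1}^n a_j - n\Bigr) = (n+1)(d-1) - \sum_{j=1}^n a_j.$$
Since $S/I$ is Gorenstein, the second (symmetric) formula in Theorem \ref{main} applies directly, yielding
$$G = (1-t^{d-1})\prod_{j=1}^n(1+t+\cdots+t^{d-2}) + t^{(n+1)(d-1)-\sum_{j=1}^n a_j}\prod_{j=1}^n(1+t+\cdots+t^{a_j-1}).$$

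Next I would clear the geometric-series factors by using the identity $1+t+\cdots+t^{k-1}=(1-t^k)/(1-t)$ on each product. This converts the first term into $(1-t^{d-1})^{n+1}/(1-t)^n$ and the second term into $t^{(n+1)(d-1)-\sum a_j}\prod_{j=1}^n(1-t^{a_j})/(1-t)^n$. Multiplying $G$ by $(1-t)^n$ gives
$$G\cdot(1-t)^n = (1-t^{d-1})^{n+1} + t^{(n+1)(d-1)-\sum_{j=1}^n a_j}\prod_{j=1}^n(1-t^{a_j}).$$
Finally, since $H_{M(f)}(t) = G/(1-t)$, dividing both sides by $(1-t)^{n+1}$ delivers the claimed formula.

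There is no real obstacle here once the preliminary results are in place: the proof is a direct substitution. The only points requiring care are recognizing that the Gorenstein hypothesis in Theorem \ref{main} is available (because a zero-dimensional complete intersection is Gorenstein), applying Theorem \ref{comput}(1) to get the explicit form of $F$, and bookkeeping the exponent $(n+1)(d-1)-\sum a_j$ correctly.
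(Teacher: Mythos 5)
Your proposal is correct and follows essentially the same route as the paper, which likewise derives the formula by substituting $F=\prod_{j=1}^n(1+t+\cdots+t^{a_j-1})$ into Theorem \ref{main} (and here $F$ is palindromic, so the two forms of that theorem coincide). The exponent bookkeeping and the conversion of the geometric-series products into the factors $(1-t^{d-1})^{n+1}$ and $\prod_j(1-t^{a_j})$ over $(1-t)^{n+1}$ are exactly as intended.
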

The proof follows directly from Theorem \ref{main} because in this case
$$F=\Pi_{j=1}^n(1+\ldots +t^{a_j-1}).$$
Using now in addition Theorem \ref{comput}, we get the following.

\begin{Corollary}\label{corct} (compare to Dimca \cite[Corollary 5]{D})
With the above notation and assumptions, we have \\
$\tau(V)=a_1\cdots a_n$,  $ct(V)=T-\sum a_i +n$ and  $st(V)=(n+1)(d-2)+1-\min_ia_i$. \\
Moreover, in this case one has $\deg G\geq \deg F$.
\end{Corollary}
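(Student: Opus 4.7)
The plan is to reduce Corollary \ref{corct} entirely to Lemma \ref{L} and Theorem \ref{comput}, specialized to the Milnor-algebra setting where $d_i = d-1$ for all $i = 0, \dots, n$. The only bookkeeping required is translating the invariants $\tau(V)$, $ct(V)$, $st(V)$ into the Hilbert-series data $F(1)$, $\deg F$, $\deg G$.

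First I would dispatch the multiplicity. By Lemma \ref{L}(2), $\tau(V) = F(1)$, and Theorem \ref{comput}(1) gives $F(1) = a_1 a_2 \cdots a_n$. Next, for the coincidence threshold, Lemma \ref{L}(2) states $\deg F = (n+1)(d-2) - ct(V)$, while Theorem \ref{comput}(1) gives $\deg F = \sum_j a_j - n$. Combining these,
\[
ct(V) = (n+1)(d-2) - \sum_{i=1}^n a_i + n,
\]
which is the claimed formula (with $T = (n+1)(d-2)$).

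For the stability threshold I would use Lemma \ref{L}(1), which identifies $st(V) = \deg G$, together with Theorem \ref{comput}(3). In our setting $d_i = d-1$ for all $i$, so $\sum_{i=0}^n d_i - n - a_1 = (n+1)(d-1) - n - a_1 = (n+1)(d-2) + 1 - a_1$, and since we assumed $a_1 \leq a_2 \leq \cdots \leq a_n$, the quantity $a_1$ equals $\min_i a_i$. The final inequality $\deg G \geq \deg F$ is the last sentence of Theorem \ref{comput}(3).

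The one step that requires a small argument is verifying the hypothesis of Theorem \ref{comput}(3), namely that either $d_0 > a_1$ or $d_0 = a_1 = a_2$. Here $d_0 = d-1$, and by Theorem \ref{comput}(2) we have $a_1 \leq d_1 = d-1$. If $a_1 < d-1$ the first alternative holds. Otherwise $a_1 = d-1$, and because $a_1 \leq a_j \leq d_j = d-1$ for every $j$, all $a_j$ equal $d-1$, which trivially gives $d_0 = a_1 = a_2$ (and, by Lemma \ref{L2}, corresponds to the cone case). In either case Theorem \ref{comput}(3) applies, and this tiny case split is the only potential obstacle; everything else is a direct substitution.
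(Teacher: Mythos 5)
Your proof is correct and follows essentially the same route as the paper, which simply derives the corollary from Lemma \ref{L} and Theorem \ref{comput} after substituting $d_i=d-1$ for all $i$. Your explicit verification that the hypothesis of Theorem \ref{comput}(3) holds (via $d_0=d-1\geq a_1$ and the cone case $a_1=\cdots=a_n=d-1$ when equality occurs) is a detail the paper leaves implicit, and it checks out.
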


By Lemma \ref{l} and Theorem \ref{comput} (see also \cite[Corollaries 1, 5]{D}) we get the following.
 \begin{Corollary} \label{l1} $(J_f)_p=I_p$ if $p\geq \deg G=(n+1)(d-1)-n-\min_ia_i$.
 \end{Corollary}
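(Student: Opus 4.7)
The plan is to deduce the corollary by combining three earlier results: Lemma \ref{l}(3), which gives the threshold for $(J_f)_p = I_p$; Corollary \ref{corct} (equivalently Theorem \ref{comput}(3)), which provides the inequality $\deg G \geq \deg F$ and the explicit formula for $\deg G$; and Lemma \ref{L2} plus Theorem \ref{comput}(2), which ensure we are in the range of applicability.

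First, I would invoke Lemma \ref{l}(3): it gives $(J_f)_p = I_p$ for every $p \geq u := \max\{\deg G, \deg F\}$. Thus the corollary reduces to showing that $u = \deg G$ and that $\deg G = (n+1)(d-1)-n-\min_i a_i$.

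Second, I would verify that Theorem \ref{comput}(3) applies in the Jacobian setting, where $d_0 = d_1 = \cdots = d_n = d-1$. Its hypothesis requires either $d_0 > a_1$ or $d_0 = a_1 = a_2$. By Theorem \ref{comput}(2) we have $a_i \leq d_i = d-1$ for every $i$, so only two cases occur: either some $a_i$ satisfies $a_i < d-1$, in which case $a_1 = \min_i a_i < d-1 = d_0$; or else all $a_i = d-1$, in which case (using $n \geq 2$) $d_0 = a_1 = a_2 = d-1$ and we are in the second branch of the hypothesis. In either situation, Theorem \ref{comput}(3) yields both $\deg G \geq \deg F$ (so $u = \deg G$) and the formula $\deg G = \sum_{i=0}^{n} d_i - n - a_1 = (n+1)(d-1) - n - \min_i a_i$.

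Alternatively, the formula for $\deg G$ also follows from Lemma \ref{L}(1), which identifies $\deg G$ with the stability threshold $st(V)$, combined with the equality $st(V) = (n+1)(d-2) + 1 - \min_i a_i$ recorded in Corollary \ref{corct}; a direct arithmetic rewrite gives $(n+1)(d-1) - n - \min_i a_i$.

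There is no real obstacle: this corollary is purely a bookkeeping consequence of the earlier machinery. The only point that requires any care is checking that Theorem \ref{comput}(3) does cover the cone case $a_1 = \cdots = a_n = d-1$ described by Lemma \ref{L2}, which is handled by the second branch ($d_0 = a_1 = a_2$) of that theorem's hypothesis.
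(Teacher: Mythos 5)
Your proposal is correct and follows essentially the same route as the paper, which derives the corollary precisely by combining Lemma \ref{l} with Theorem \ref{comput}; you merely make explicit the (valid) check that the hypothesis of Theorem \ref{comput}~(3) is always satisfied in the Jacobian setting, since $a_1<d-1=d_0$ or else all $a_i=d-1$ and $d_0=a_1=a_2$. The arithmetic identification $(n+1)(d-2)+1-\min_i a_i=(n+1)(d-1)-n-\min_i a_i$ in your alternative via Corollary \ref{corct} is also correct.
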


The following example considers plane curves with at most three nodes as singularities.

\begin{Example} (compare to \cite[Example 4.3]{DSt}) {\em Let $V$ be a plane curve. If $V$ has a single node, it is clear that the singular locus $Y$ of $C$ is a complete intersection of type $(a_1,a_2)=(1,1)$ and hence $st(V)=3d-6$. If $V$ has two nodes, then $Y$ is a complete intersection of type $(a_1,a_2)=(1,2)$ and again $st(V)=3d-6$. Suppose $C$ has three nodes. Then $Y$ is a complete intersection if and only if the nodes a collinear, and in this case has  type $(a_1,a_2)=(1,3)$. Once more one has $st(V)=3d-6$ in this latter case.}
\end{Example}

The following example discusses a large family of hypersurfaces having the singular locus $Y$ a 0-dimensional complete intersection, and hence for which the formulas given in Corollary \ref{corct} hold.

\begin{Example} \label{E1} (compare to \cite[Theorem 6.4.14, p. 211]{book2}) {\em
 Let $b_1,...,b_n \geq 1$ and $c_1,...,c_n \geq 2$ be two families of integers such that $b_jc_j=d$ for all $j=1,...,n$, and some fixed integer $d$. Consider the polynomials $g_j=x_0^{b_j}+x_j^{b_j}$ and define the homogeneous polynomial $f \in S_d$ by the sum
$f=g_1^{c_1}+ \cdots +g_n^{c_n}$. It is easy to see that the hypersurface $V:f=0$ has exactly $b=b_1\cdots b_n$ singularities, the common zeros of the polynomials $g_j$ for $j=1,...,n$, and at each singular point $p$ the germ of $V$  is given by a local equation of the type $u_1^{c_1}+ \cdots +u_n^{c_n}=0$, with $(u_1,...,u_n)$ a local (analytic) system of coordinates centered at $p$.
It follows from this description that the ideal $I$ coincides with the ideal spanned by $g_j^{c_j-1}$
for $j=1,...,n$, and hence $Y$ is a complete intersection of type $(a_1,...,a_n)$ with $a_j=b_j(c_j-1)=d-b_j$.
It follows that
$$\tau(V)=(d-b_1)\cdots (d-b_n), \ ct(V)=\sum_j b_j -n +d-2$$
and
$$  st(V)=(n+1)(d-2)+1-\min_i (d-b_i).$$
It follows that 
\begin{equation} 
\label{REL2}
mdr(V)=\sum_j b_j -n=d(\sum_j \frac{1}{c_j})-n =d\alpha_V -n,
\end{equation} 
where $\alpha_V=\sum_j \frac{1}{c_j}$ is the Arnold exponent of the singularity $u_1^{c_1}+ \cdots +u_n^{c_n}=0$, see for instance \cite{AGV}.\\

To get nodal hypersurfaces of even degree, we can set $b_1=...=b_n=d_1$, $c_1=...=c_n=2$ and $d=2d_1$.
It follows that in this case we get $ct(V)=nd_1+d-n-2$ and \cite[Theorem 4.1]{DSt3} combined with Proposition \ref{CIcase} above implies that the corresponding Alexander polynomial is given by the formula
$$\Delta_V(t)=t+(-1)^{n+1}.$$
This is the only case (to the best of our knowledge) when such nontrivial Alexander polynomials can be completely determined for series of hypersurfaces of arbitrary dimension. In relation with the last formula, which implies that the Betti number $b_{n-1}(F)$ of the Milnor fiber given by  $F:f=1$ in ${\CC}^{n+1}$ satisfies  $b_{n-1}(F)=1$, one can see also \cite{vS} for more general, but less precise results.
}
\end{Example}

Finally, we would like to state the following two conjectures. The first one is made in view of Theorem \ref{comput} (3) and the evidence given in
\cite[Example 2]{D}.

\begin{Conjecture} \label{C1} For any projective degree $d$ hypersurface $V:f=0$ in $ {\PP}^n$
having only isolated singularities, one has
$$st(V)=\deg G\geq \deg F=(n+1)(d-2)-ct(V).$$
\end{Conjecture}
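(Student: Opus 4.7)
The plan is to reformulate Conjecture \ref{C1} as a dichotomy about the Hilbert function of $M(f)$ and then close the remaining case by exploiting the duality properties of the graded module $N=I/J_f$ alluded to in the introduction. By Lemma \ref{L}, the inequality $\deg G\geq \deg F$ is equivalent to $st(V)+ct(V)\geq (n+1)(d-2)$. Using
\[
\dim_K M(f)_k=\dim_K(S/I)_k+\dim_K N_k
\]
together with Lemma \ref{l}(4) (which gives $\dim_K(S/I)_k=\tau(V)$ exactly for $k\geq \deg F$), the conjecture holds as soon as either (a) $N_k\neq 0$ for some $k\geq \deg F$, or (b) $\dim_K M(f)_{\deg F-1}\neq \tau(V)$.

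The first alternative is immediate: if $N_k\neq 0$ for some $k\geq \deg F$, then $\dim_K M(f)_k>\tau(V)$, and since the Hilbert function of $M(f)$ eventually stabilizes at $\tau(V)$ it must still change at some degree $>k$, forcing $\deg G>k\geq \deg F$. The only remaining situation is $N_k=0$ for all $k\geq \deg F$ together with $\dim_K M(f)_{\deg F-1}=\tau(V)$; equivalently,
\[
\dim_K N_{\deg F-1}=\tau(V)-\dim_K(S/I)_{\deg F-1},
\]
the right-hand side being strictly positive by Lemma \ref{l}(4). The task reduces to ruling out this critical configuration.

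To this end I would invoke the Gorenstein-type self-duality of $N=I/J_f$ developed in \cite{DS}, \cite{Se}, and \cite{EM}, which provides a perfect graded pairing between $N_k$ and $N_{T-k}$ for an appropriate shift $T$ built from the socle degree $(n+1)(d-2)$ of the Milnor algebra of a smooth hypersurface of the same degree. When $\deg F\leq (T+1)/2$, the vanishing $N_k=0$ for $k\geq \deg F$ already forces $N_{\deg F-1}\cong N_{T-\deg F+1}=0$ by duality, contradicting the critical equality above. When $\deg F>(T+1)/2$, the argument is more delicate: one must combine duality with the $h$-polynomial structure of $F$ (arising from the Cohen-Macaulayness of $S/I$), the structure of $N$ as a module over $M(f)$, and, for $n=2$, the Lefschetz property of $I/J$ proved in Theorem \ref{mainL}; as sanity checks, Theorem \ref{comput}(3) already settles the complete intersection case, and a deformation argument transferring the inequality via upper semicontinuity to the complete intersection limit is worth exploring. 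The main obstacle will be the high-$\deg F$ regime for $n\geq 3$: Conca's counterexample in Remark \ref{rkL} forbids any purely Lefschetz strategy on $I/J_f$, and duality alone is not sufficient to control $\dim_K N_{\deg F-1}$, so a delicate combination of duality, Cohen-Macaulayness of $S/I$, and a finer module-theoretic analysis of $N$ over $S/I$ will be required.
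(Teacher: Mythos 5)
You should be aware that the statement you are trying to prove is stated in the paper as a \emph{Conjecture}: the authors offer no proof, only evidence, namely the case where the saturation $I$ is a complete intersection (Theorem \ref{comput}(3), restated geometrically in Corollary \ref{corct}) and the examples in \cite{D}. So there is no proof in the paper to compare yours against, and your proposal does not close the gap either. Your opening reduction is correct and worth keeping: writing $\dim_K M(f)_k=\dim_K(S/I)_k+\dim_K N_k$ and using Lemma \ref{l}(4) and Lemma \ref{L}, the conjecture is indeed equivalent to excluding the single configuration where $N_k=0$ for all $k\geq \deg F$ while $N_{\deg F-1}\neq 0$ (your cases (a) and (b) are both handled correctly). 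The duality step is also sound in spirit: granting the self-duality $\dim N_k=\dim N_{T-k}$ with $T=(n+1)(d-2)$ from \cite{DS}, \cite{Se}, \cite{EM}, the vanishing of $N$ in degrees $\geq \deg F$ forces vanishing in degrees $\leq T-\deg F$, which kills the critical configuration precisely when $\deg F-1\leq T-\deg F$.

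The genuine gap is the complementary range $\deg F>(T+2)/2$, i.e.\ small $ct(V)$ (equivalently small $mdr(V)$), which is exactly the interesting regime --- note that your own Remark \ref{rkL0} example $x^py^q+z^d$ lives there. In that range duality is self-consistent with the critical configuration: $N$ could a priori be supported on the symmetric interval $[T-\deg F+1,\deg F-1]$ and vanish outside it, and neither unimodality (known only for $n=2$, and false for $n=3$ by Conca's example) nor the Cohen--Macaulayness of $S/I$ rules this out. The fallback suggestions do not work as stated: the semicontinuity/deformation idea fails because both $\deg F$ and $\deg G$ jump discontinuously in families (the Tjurina number and the saturation are not semicontinuous in a compatible way), and the Lefschetz property of $N$ is unavailable for $n\geq 3$, as you note. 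So the proposal is an honest reduction plus a partial result, not a proof; the conjecture remains open, and the only rigorously settled instance in the paper is the complete intersection case of Theorem \ref{comput}(3).
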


The second one is made in view of the formula \eqref{REL2} above and 
\cite[Theorem 2.1]{DS14} saying that the claim holds for $n=2$.

\begin{Conjecture} \label{C2} For any projective  degree $d$ hypersurface $V:f=0$ in $ {\PP}^n$
having only weighted homogeneous isolated singularities, one has
$$mdr(V) \geq d\alpha_V -n, $$
with $\alpha_V$  the minimum of the Arnold exponents of the singularities of $V$.
\end{Conjecture}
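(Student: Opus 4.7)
The plan is to extend the Hodge-theoretic argument of \cite[Theorem 2.1]{DS14} (which settles the $n=2$ case) to higher dimensions. The strategy is to translate an essential syzygy among $f_0,\dots,f_n$ into a closed rational differential form on $U=\PP^n\setminus V$, locate the resulting cohomology class in the Hodge filtration of $H^n(U;\CC)$, and read off a lower bound for $mdr(V)$ from the local Steenbrink spectrum at each singularity.

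Concretely, I would take a non-Koszul syzygy $(a_0,\dots,a_n)$ with each $a_i$ of minimal possible degree (that minimum being, by Corollary \ref{ident}, exactly $mdr(V)$), and assemble the $n$-form
$$\omega = \sum_{i=0}^n (-1)^i a_i\, dx_0\wedge\cdots\wedge\widehat{dx_i}\wedge\cdots\wedge dx_n,$$
for which $\omega\wedge df=0$. Then $\eta=\omega/f$ is a closed rational $n$-form on $\PP^n$ with a pole of order one along $V$, and passing to cohomology modulo Koszul relations sends $ER({\bf f})$ to a graded piece of $H^n(U;\CC)$, with the image of a minimal-degree relation non-zero since Koszul relations produce exact forms. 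By Griffiths's pole-order filtration, $[\eta]\in F^n H^n(U)$, and the polynomial degree of $\omega$ determines the monodromy eigenvalue $\exp(2\pi i\,m/d)$ of $[\eta]$ under the identification with the vanishing cohomology of the global Milnor fibre $F:f=1$ in $\CC^{n+1}$. Locally at each singular point $p\in V$, the Hodge filtration on the vanishing cycles is controlled by the spectrum $\mathrm{Sp}(f,p)$, whose minimum is $\alpha_p$, so a non-vanishing class in $F^n H^n(U)$ with a given monodromy eigenvalue forces the associated spectral number to be at least $\alpha_V=\min_p\alpha_p$. Translating this inequality between spectral numbers and polynomial degrees, via the standard normalisation that sends a numerator of polynomial degree $k$ to spectral level $(k+n+1)/d$, would yield exactly $mdr(V)\geq d\alpha_V - n$, matching the sharp value achieved in Example \ref{E1}.

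The main obstacle, and the reason the conjecture remains open for $n\geq 3$, is that in higher dimensions the pole-order filtration no longer coincides with the Hodge filtration on $H^n(U)$: the two are related by Saito's Hodge-ideal corrections, equivalently by the microlocal $V$-filtration of $f$. For $n=2$ this coincidence is what makes the argument of \cite{DS14} clean, since $[\eta]\in F^n=F^{\mathrm{pole}}$ automatically. Overcoming the discrepancy would require a careful analysis of the Hodge ideals of the weighted homogeneous hypersurface germs appearing as singularities of $V$, exploiting that their local $V$-filtration is explicitly computable from the weights (so that the contribution of each singularity to the Hodge-ideal correction can be estimated degree by degree). Alternatively, one could attempt a purely algebraic route via the Koszul complex of $df$ that extracts $\alpha_V$ as a degree bound without invoking Saito's theory; the weighted homogeneity of the germs makes this plausible but no such argument is currently available, which is why the assertion is stated as Conjecture \ref{C2} rather than a theorem.
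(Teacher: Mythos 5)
This statement is Conjecture~\ref{C2}: the paper offers no proof of it, only the supporting evidence of formula~\eqref{REL2} in Example~\ref{E1} and the reference to \cite[Theorem 2.1]{DS14} for the case $n=2$. Your text is therefore not something that can be checked against a proof in the paper, and, more importantly, it is not itself a proof: it is a strategy outline whose decisive step is explicitly left open. The construction of the form $\omega$ from a minimal essential syzygy, the identity $\omega\wedge df=0$, the passage to $\eta=\omega/f$ and to a class in $H^n(U;\CC)$ with prescribed monodromy eigenvalue -- all of that is a reasonable transcription of the $n=2$ argument. But the inequality you want comes precisely from asserting that this class lies in the top piece $F^n$ of the \emph{Hodge} filtration, whereas what Griffiths's theory gives you for free is membership in the top piece of the \emph{pole order} filtration. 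For $n=2$ these agree in the relevant range, which is why \cite{DS14} closes the argument; for $n\geq 3$ they do not, the correction being governed by the local $V$-filtration (or Hodge ideals) at the singular points. You name this obstruction yourself and propose no way around it, so the chain of implications breaks exactly at the step ``a non-vanishing class in $F^n H^n(U)$ with a given monodromy eigenvalue forces the associated spectral number to be at least $\alpha_V$'': for $n\geq 3$ you only know the class lies in the pole-order piece, which is a priori larger, and no lower bound on the spectral number follows.

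Two smaller points. First, even granting the Hodge-theoretic identification, you would need to verify that the class of $\eta$ is genuinely non-zero in cohomology, i.e.\ that an essential (non-Koszul) syzygy cannot produce an exact form; this requires the precise identification of $ER({\bf f})$ with Koszul cohomology in the relevant degree and is not automatic from ``Koszul relations produce exact forms.'' Second, the normalisation sending polynomial degree $k$ of the numerator to spectral level $(k+n+1)/d$ must be matched carefully against the degree conventions of Corollary~\ref{ident} and formula~\eqref{REL} (note Remark~\ref{rk2} on the twist in the definition of the degree of a relation); an off-by-one here would change the conjectured bound. As it stands, your text is a correct diagnosis of why the statement is a conjecture rather than a theorem, not a proof of it.
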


\section{A Lefschetz property in the case $n=2$}

In this section we assume $n=2$ and discuss Lefschetz type properties for the graded $S$-module $H^0_\mathbf{m}(S/J)=I/J$, the $0$-degree local cohomology of the corresponding  algebra $S/J$. 
Let $N$ be a  graded  $S$-module and $h\in S_1$ a homogeneous form of degree $1$. The element $h$ is called a {\em Lefschetz element for $N$} if for all integers $i$ the $K$-linear map $h:N_i\to N_{i+1}$ induced by the multiplication with $h$ has maximal rank. Our main result is the following.

\begin{Theorem}  \label{mainL} Let $J=(f_0,f_1,f_2)$ be a dimension one almost complete intersection and let  $N=H^0_\mathbf{m}(S/J)=I/J_f$ be the $0$-degree local cohomology of the corresponding  algebra $M=S/J$. Then there exists a Lefschetz element for $N$. More precisely,
for a generic linear form $l \in S_1$, the multiplication by $l$ induces injective morphisms $N_i \to N_{i+1}$ for $i<(d_0+d_1+d_2-3)/2$ and surjective morphisms 
$N_i \to N_{i+1}$ for $i \geq i_0=[(d_0+d_1+d_2-3)/2]$.
\end{Theorem}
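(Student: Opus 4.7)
The plan is to combine a Gorenstein self-duality of $N=I/J$ centered at $\sigma:=d_0+d_1+d_2-3$ with the Harima--Migliore--Nagel--Watanabe theorem applied to the Artinian complete intersection $A=S/(l,f_1,f_2)$. First, using the explicit formula from Theorem~\ref{main} (taken with $n=2$), a direct computation gives the identity $t^{\sigma+1}(G-F)(1/t)=-(G-F)$, which implies the symmetry $\dim_K N_i=\dim_K N_{\sigma-i}$ for all $i$. The Cayley--Bacharach theory applied to the zero-dimensional complete intersection $\Gamma=V(f_1,f_2)\subset\PP^2$ and the residual pair induced by $f_0$ (Proposition~\ref{p}, together with the duality worked out in \cite{DS},\cite{Se},\cite{EM}) promotes this numerical symmetry to a perfect graded pairing $N_i\otimes N_{\sigma-i}\to K$, compatible with multiplication by every element of $S$. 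Consequently $\cdot l:N_i\to N_{i+1}$ is the transpose of $\cdot l:N_{\sigma-i-1}\to N_{\sigma-i}$ and the two maps have the same rank, so injectivity in degree $i$ is equivalent to surjectivity in degree $\sigma-i-1$. It therefore suffices to prove surjectivity of $\cdot l:N_i\to N_{i+1}$ for $i\geq i_0=\lfloor\sigma/2\rfloor$.

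Next, I would choose $l\in S_1$ generic so that $l$ is a non-zero-divisor on both $B=S/(f_1,f_2)$ and $S/I$ (each Cohen--Macaulay of dimension one, so the conditions are non-empty Zariski-open). The snake lemma applied to $0\to N\to S/J\to S/I\to 0$ and $\cdot l$, using that $l$ is a non-zero-divisor on $S/I$, identifies the cokernel $N/lN$ with $(I+(l))/(J+(l))$, which sits as a submodule of $S/(J,l)=A/\bar f_0 A$ where $A=B/lB=S/(l,f_1,f_2)$. The ring $A$ is Artinian Gorenstein of socle degree $\sigma_A=d_1+d_2-2=\sigma-d_0+1$, and the Harima--Migliore--Nagel--Watanabe theorem supplies a Lefschetz element for $A$, thereby controlling the Hilbert function of $A/\bar f_0 A$.

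To prove the required surjectivity it suffices to show $(N/lN)_{i+1}=0$ for $i\geq i_0$. Gorenstein duality in $A$ gives $\dim_K(0:_A\bar f_0)_k=\dim_K(A/\bar f_0 A)_{\sigma_A-k}$, which pins down the top degree of $S/(J,l)$; lifting back to $S$, the image of the saturation $I$ inside $A$ is controlled by the Hilbert function formulas of Theorem~\ref{main} together with the Lefschetz element of $A$. Combined with the identification $\sigma_A+d_0-1=\sigma$, the vanishing $(N/lN)_{i+1}=0$ holds exactly when $i+1>\sigma/2$, i.e.\ when $i\geq i_0$. The main obstacle is the careful degree bookkeeping that translates the Gorenstein pairing on $A$ (centered at $\sigma_A$) into the pairing on $N$ (centered at $\sigma$); the shift by $d_0-1$ between the two is exactly what produces the critical bound $\sigma/2$ in the statement, and the various genericity requirements on $l$ can all be satisfied simultaneously since each is a non-empty Zariski-open condition on $S_1$.
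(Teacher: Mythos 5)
Your reduction via duality is sound: the numerical symmetry $\dim N_i=\dim N_{\sigma-i}$ with $\sigma=d_0+d_1+d_2-3$ does follow from Theorem \ref{main}, and the perfect pairing from \cite{DS}, \cite{Se}, \cite{EM} (or \cite{SW}) upgrades it so that injectivity of $l:N_i\to N_{i+1}$ is equivalent to surjectivity of $l:N_{\sigma-i-1}\to N_{\sigma-i}$. This is essentially how the paper obtains the precise degree ranges once the Lefschetz property is known. The snake-lemma identification $N/lN\cong (I+(l))/(J+(l))\subset S/(J,l)=A/\bar f_0A$ is also correct. The problem is the step that is supposed to actually prove the surjectivity, i.e.\ the vanishing $(N/lN)_j=0$ for $j>\sigma/2$.

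Here there is a genuine gap. The Harima--Migliore--Nagel--Watanabe theorem gives a Lefschetz element of $A=S/(l,f_1,f_2)$, that is, maximal rank for multiplication by a \emph{generic linear form} on $A$. It says nothing about the multiplication map $\bar f_0:A_{j-d_0}\to A_j$, which is what controls the Hilbert function of $A/\bar f_0A$ (and hence of its submodule $N/lN$): $\bar f_0$ has degree $d_0$, typically $>1$, and is moreover a \emph{specific}, non-generic form constrained by the almost complete intersection structure. Gorenstein duality in $A$ only converts the question about $(A/\bar f_0A)_j$ into one about $(0:_A\bar f_0)_{\sigma_A-j}$; it does not resolve either. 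Determining the size of $(0:_A\bar f_0)$ in the critical middle degrees is exactly the content of the theorem, and it is not "degree bookkeeping": one must show that the restriction of the rank-two syzygy bundle $\mathcal K$ of $(f_0,f_1,f_2)$ to the generic line $l=0$ has balanced splitting type, which is where the paper invokes semistability and the Grauert--M\"ulich-type restriction arguments of Brenner--Kaid (\cite[Theorem 2.2, Corollary 2.4]{BK}), using $N_m\cong H^1(\PP^2,\mathcal K(m))$. Indeed, Remark \ref{rkL} shows the analogous statement fails for $n=3$, so no argument as soft as the one you propose can work; some input special to rank-two bundles on $\PP^2$ (or an equivalent algebraic substitute) is indispensable. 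To repair the proof you would need to either reproduce the Brenner--Kaid semistability analysis or supply an algebraic proof that $\bar f_0$ has maximal rank on $A$ in the relevant degrees, neither of which follows from HMNW.
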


\begin{proof} Consider the exact sequence of sheaves on $\PP^2$ given by
$$0 \to \mathcal K \to \mathcal O(-d_0)\oplus \mathcal O(-d_1) \oplus \mathcal O(-d_2) \to  \mathcal J \to 0,$$
where $\mathcal K$ is the syzygy bundle as in \cite{BK} and $ \mathcal J$ is the sheaf ideal in $\mathcal O= \mathcal O_{\PP^2}$ associated to the ideal $J$. For any integer $m \in \ZZ$, one has $H^0(\PP^2, \mathcal J(m))=I_m$, and hence this exact sequence yields an isomorphism
$$H^1(\PP^2,\mathcal K(m))=H^0 _\mathbf{m}(S/J)_{m}=I_m/J_m=N_m.$$

If the sheaf $\mathcal K$ is semistable, then the proof follows exactly the same path as the proof of \cite[Theorem 2.2 (1)]{BK}. See also \cite[Remark 2.3]{BK}.
And when the sheaf $\mathcal K$ is not semistable, then the proof follows the same approach as in the second part in the proof of \cite[Corollary 2.4]{BK}.

The last claim follows using semi-continuity properties of the rank of a linear mapping and the duality properties for $N$, see \cite[Theorem 3.2]{Se}.

\end{proof}

This implies a partial Lefschetz type property for the algebra $M=S/J$, namely we have the following.

\begin{Corollary} \label{corLA} For a generic linear form $l \in S_1$, the multiplication by $l$ induces injective morphisms $M_i \to M_{i+1}$ for $i<(d_0+d_1+d_2-3)/2$. The morphisms 
$M_i \to M_{i+1}$ for $i \geq i_0=[(d_0+d_1+d_2-3)/2]$ are surjective if and only if $i_0\geq \deg F.$

 \end{Corollary}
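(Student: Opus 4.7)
The plan is to derive the corollary from Theorem \ref{mainL} by analysing the short exact sequence of graded $S$-modules
$$0 \to N \to M \to S/I \to 0,$$
where $N = H^0_\mathbf{m}(M) = I/J$ is the torsion submodule of $M = S/J$. A generic $l \in S_1$ can be chosen to satisfy simultaneously the Lefschetz-type conclusions of Theorem \ref{mainL} on $N$ and to be a nonzerodivisor on $S/I$ (the latter being an open nonempty condition since $\dim S/I = 1$). For such an $l$, the map $l \colon (S/I)_i \to (S/I)_{i+1}$ is injective in every degree, and the identity $H_{S/(I,l)}(t) = (1-t)H_{S/I}(t) = F(t)$ yields the equivalence $(S/(I,l))_{i+1}=0 \iff i+1 > \deg F \iff i \geq \deg F$.

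Next, I would apply the snake lemma to multiplication by $l$ on the above short exact sequence. Injectivity of $l$ on $S/I$ in each degree makes the connecting homomorphism vanish, producing an isomorphism $\Ker(l|_{M_i}) \cong \Ker(l|_{N_i})$ together with a short exact sequence
$$0 \to \Coker(l|_{N_i}) \to \Coker(l|_{M_i}) \to (S/(I,l))_{i+1} \to 0$$
in each degree $i+1$. The injectivity assertion is then immediate: for $i < (d_0+d_1+d_2-3)/2$ Theorem \ref{mainL} gives $\Ker(l|_{N_i})=0$, so $l$ acts injectively on $M_i$ as well.

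For the surjectivity equivalence, Theorem \ref{mainL} supplies $\Coker(l|_{N_i})=0$ for every $i\geq i_0$, whence $\Coker(l|_{M_i}) \cong (S/(I,l))_{i+1}$ throughout this range. Therefore $l \colon M_i\to M_{i+1}$ is surjective for every $i\geq i_0$ if and only if $(S/(I,l))_{i+1}=0$ for every such $i$, and by the Hilbert series computation this is exactly the condition $i_0\geq \deg F$; the reverse direction is witnessed by the degree $i=\deg F-1\geq i_0$, where the nonvanishing of the leading coefficient of $F$ forces $(S/(I,l))_{\deg F}\neq 0$. The only genuine care needed is the simultaneous choice of $l$: one must ensure that a single generic linear form meets both the requirements of Theorem \ref{mainL} and the nonzerodivisor condition on $S/I$. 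Since each is a nonempty Zariski open condition on $S_1$, their intersection is again generic, which is the mild but essential step that glues the information on $N$ and on $S/I$ together into the claim for $M$.
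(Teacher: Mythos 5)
Your argument is correct and follows essentially the same route as the paper: both proofs analyse the exact sequence $0 \to N \to M \to S/I \to 0$, apply Theorem \ref{mainL} to $N$, and reduce the surjectivity question to the vanishing of $(S/I)_{i+1}/l\cdot (S/I)_{i}$ for $i \geq i_0$, which is governed by $\deg F$. The only cosmetic difference is that the paper packages the last step via the freeness of $S/I$ as a graded $K[l]$-module (all generators sit in degree $\leq i_0$ iff $i_0 \geq \deg F$), whereas you use the snake lemma together with the identity $H_{S/(I,l)}(t)=F(t)$; these amount to the same fact.
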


\begin{proof} This claim follows from Theorem \ref{mainL} and the fact that $M/N=S/I$ can be regarded  as a free graded $K[l]$-module, see \cite[Remark 1.7]{DS}. Indeed, the condition
$i_0 \geq \deg F$ is equivalent to $\dim M_{i_0}=\dim N_{i_0} + F(1)$. And the last equality says exactly that all the generators of the free graded $K[l]$-module $M/N$ occur in degree at most $i_0$ and hence $l: (M/N)_i \to (M/N)_{i+1}$ for $i \geq i_0$ is surjective. The surjectivity claim follows using the exact sequence
$ 0 \to N_m \to M_m \to (M/N)_m \to 0.$
\end{proof}

\bigskip

From now on  we consider  the case of Jacobian ideals of plane curves,  and note that the graded $S$-module $H^0_\mathbf{m}(M(f))=I/J_f$, the $0$-degree local cohomology of the corresponding Milnor algebra $M(f)$, occurs in a number of recent preprints, see
\cite{DS14}, \cite{DS},  \cite{Se}. The above results give the following.

\begin{Corollary}  \label{corL0} Let $C:f=0$ be a degree $d$ reduced plane curve and let  $N(f)=H^0_\mathbf{m}(M(f))=I/J_f$ be the $0$-degree local cohomology of the corresponding Milnor algebra $M(f)$. Then there exists a Lefschetz element for $N(f)$. More precisely,
for a generic linear form $l \in S_1$, the multiplication by $l$ induces injective morphisms $N(f)_i \to N(f)_{i+1}$ for integers $i<(3d-6)/2$ and surjective morphisms 
$N(f)_i \to N(f)_{i+1}$ for integers $i \geq i_0=[(3d-6)/2]$.
\end{Corollary}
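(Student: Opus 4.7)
The plan is to derive the corollary as an immediate specialization of Theorem \ref{mainL} to the Jacobian ideal of the plane curve $C: f = 0$. Set $J_f = (f_0, f_1, f_2)$ with $f_i = \partial f / \partial x_i$, each of degree $d - 1$. Since $C$ is reduced and one-dimensional in $\PP^2$ with at most finitely many singular points, the Jacobian ideal satisfies $\dim S/J_f \leq 1$, with equality as soon as $C$ has at least one singular point. As recalled at the beginning of Section 3, after a generic linear change of coordinates we may arrange that $f_1, f_2$ form a regular sequence in $S$, placing $J_f$ exactly in the framework of the dimension one almost complete intersection studied in Theorem \ref{mainL}.

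Substituting $d_0 = d_1 = d_2 = d - 1$ into Theorem \ref{mainL} gives $d_0 + d_1 + d_2 - 3 = 3(d-1) - 3 = 3d - 6$, so the thresholds $(d_0+d_1+d_2-3)/2$ and $[(d_0+d_1+d_2-3)/2]$ become exactly $(3d-6)/2$ and $[(3d-6)/2]$. Taking $N = N(f) = I/J_f$, the identification $N(f)_m = H^1(\PP^2, \mathcal{K}(m))$ used in the proof of Theorem \ref{mainL} is literally the same here, and we recover both the injectivity claim for $i < (3d-6)/2$ and the surjectivity claim for $i \geq [(3d-6)/2]$, together with the existence of a Lefschetz element for $N(f)$.

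In the remaining case where $C$ is smooth, $J_f$ is $\mathbf{m}$-primary, $I = S$ and $N(f) = M(f)$ is a graded Artinian complete intersection in three variables, whose Hilbert function is symmetric about $(3d-6)/2$. Existence of a Lefschetz element follows from the classical Harima--Migliore--Nagel--Watanabe theorem recalled in Section 2, and the peak of the symmetric Hilbert function yields precisely the same injectivity/surjectivity thresholds. There is no substantial obstacle in the argument: the real content has already been absorbed into Theorem \ref{mainL} (in particular, the semi-continuity step that passes from the existence of a single good linear form to a generic one), and what remains is only the degree bookkeeping and the handling of the smooth case as a separate classical fact.
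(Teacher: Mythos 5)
Your proposal is correct and is essentially the paper's own argument: the paper derives Corollary \ref{corL0} by exactly this specialization of Theorem \ref{mainL} to $d_0=d_1=d_2=d-1$, with no further content. Your separate treatment of the smooth case (where $\dim S/J_f=0$ and Theorem \ref{mainL} does not literally apply) is a careful extra touch, but does not change the route.
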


This implies a partial Lefschetz type property for the Milnor algebra $M(f)$, namely we have the following.

\begin{Corollary} \label{corL} For a generic linear form $l \in S_1$, the multiplication by $l$ induces injective morphisms $M(f)_i \to M(f)_{i+1}$ for $i<(3d-6)/2$. The morphisms 
$M(f)_i \to M(f)_{i+1}$ for $i \geq i_0=[(3d-6)/2]$ are surjective if and only if $ct(C)\geq 3(d-2)-i_0.$

 \end{Corollary}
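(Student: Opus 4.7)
The plan is to deduce this corollary directly from Corollary \ref{corLA} applied to the Jacobian ideal $J=J_f$ in $S=K[x_0,x_1,x_2]$, which is a dimension one almost complete intersection with $d_0=d_1=d_2=d-1$. After a generic linear change of coordinates (permissible by Lemma \ref{ele}, which shows that such a change transports $J_f$ to $J_{\varphi(f)}$), we may assume that $\{f_1,f_2\}$ forms a regular sequence in $S$, so that $J_f$ satisfies the standing hypotheses of Section~1 and of Theorem \ref{mainL}. The threshold $(d_0+d_1+d_2-3)/2$ then simplifies to $(3d-6)/2$, so the $i_0$ appearing in Corollary \ref{corLA} coincides with the $i_0$ of the present statement.

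The injectivity of $l\colon M(f)_i\to M(f)_{i+1}$ for $i<(3d-6)/2$ is then immediate from the injectivity part of Corollary \ref{corLA}. For the surjectivity clause, Corollary \ref{corLA} asserts that, for the generic $l$ chosen there, the maps $l\colon M(f)_i\to M(f)_{i+1}$ are surjective for \emph{all} $i\geq i_0$ precisely when $i_0\geq\deg F$. Recall the mechanism behind this equivalence: since $S/I$ is a free graded $K[l]$-module with top generator in degree $\deg F$ (cf.\ \cite[Remark 1.7]{DS}), surjectivity of $l$ on $S/I$ in degrees $\geq i_0$ is equivalent to $i_0\geq\deg F$, and one then lifts to $M(f)$ using the short exact sequence $0\to N(f)\to M(f)\to S/I\to 0$ together with the surjectivity already established for $N(f)$.

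It remains only to translate the condition $i_0\geq\deg F$ into a condition on the coincidence threshold. This is done by Lemma \ref{L}(2), which gives $\deg F=(n+1)(d-2)-ct(V)=3(d-2)-ct(C)$ for $n=2$. Hence $i_0\geq\deg F$ is equivalent to $ct(C)\geq 3(d-2)-i_0$, which is the desired characterization.

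There is no genuine obstacle here: all of the substantive work — the semistability argument on the syzygy bundle $\mathcal{K}$, the duality for $N(f)$, and the Lefschetz property itself — has already been carried out in Theorem \ref{mainL} and Corollary \ref{corLA}. The present corollary is a bookkeeping statement that renames the invariants via Lemma \ref{L}(2) once one specializes to the Jacobian ideal case.
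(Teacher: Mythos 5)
Your proposal is correct and follows exactly the route the paper intends: Corollary \ref{corL} is the specialization of Corollary \ref{corLA} to the Jacobian ideal with $d_0=d_1=d_2=d-1$, combined with the translation $\deg F=(n+1)(d-2)-ct(V)=3(d-2)-ct(C)$ from Lemma \ref{L}(2). The paper leaves this as an immediate consequence without written proof, and your write-up supplies precisely the implicit bookkeeping, including the correct appeal to the freeness of $S/I$ as a $K[l]$-module and the short exact sequence $0\to N(f)\to M(f)\to S/I\to 0$.
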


\begin{Remark} \label{rkL0} {\em The above condition $ct(C)\geq 3(d-2)-i_0$  is satisfied in many cases, e.g. for all nodal curves since they satisfy $ct(C) \geq 2d-4$ by  \cite[Theorem 1.2]{DSt}.  However, there are curves for which this property fails, e.g. the plane curve $C:x^py^q+z^d=0$ for $p>0$, $q>0$ and $p+q=d$. Then $ct(C)=mdr(C)+d-2=d-1< 3(d-2)-i_0.$}
  \end{Remark}
 
\begin{Remark} \label{rkL} {\em A consequence of Corollary \ref{corL0} and of the duality results for the module $N(f)$ obtained in  \cite{DS},  \cite{EM},  \cite{Se} is that we have the following relations involving the sequence of dimensions $n_k=\dim N_k$: $n_k=n_{3d-6-k}$ for any $k$ and
$$0 = n_0 \leq n_1 \leq ... \leq n_{i_0} \geq n_{i_0+1} \geq ... \geq n_{3d-6}= 0,$$
with $i_0=[(3d-6)/2]$. In other words, the sequence $(n_k)$ is unimodal, a property conjectured in  \cite[Remark 6]{D}.  A stronger property holds when $I$ is a complete intersection, namely the sequence $(n_k)$ is log-concave, see \cite{St}.
In higher dimensions, even the unimodality (and hence the Lefschetz property for $N(f)$) can fail, see
\cite[Remark 1.5]{St} for an example of this situation when $n=3$ provided by Aldo Conca. Hence the claim in Theorem \ref{mainL} does not seem to hold for $n>2$.}
  \end{Remark}


\begin{thebibliography}{99}

\bibitem{AGV}
V.I.Arnold, S.M. Gusein-Zade, A.N. Varchenko,  Singularities
  of Differentiable Maps. 2, Monographs in Math., \textbf{83},
Birkh\"auser, Basel (1988).

\bibitem{BK} H.\ Brenner, A.\ Kaid, {\em Syzygy bundles on ${\PP}^2$ and the weak Lefschetz property}, Illinois J. Math. {\bf 51} (2007), 1299-1308.

\bibitem{BH} W.\ Bruns and J. Herzog, {\em Cohen-Macaulay rings}, Revised edition. Cambridge University Press (1998).

\bibitem{BC} R.O. Buchweitz, A. Conca:  {\em New free divisors from old.} arXiv:1211.4327v1

\bibitem{CD} A.D.R.\ Choudary, A.\ Dimca, {\em Koszul complexes and hypersurface singularities}, Proc. AMS, {\bf 121}, (1994), 1009-1016.

\bibitem{book2}  A. Dimca: \textit{Singularities and Topology Hypersurface} (Universitext, Springer-Verlag, 1992).

\bibitem{D} A.\ Dimca, {\em Syzygies of Jacobian ideals and defects of linear systems},  Bull. Math. Soc. Sci. Math. Roumanie, {\bf 56}(104), (2013), 191-203.



\bibitem{DS} { A. Dimca,  M. Saito}, {\em Graded  Koszul cohomology and spectrum of certain homogeneous polynomials}, arXiv:1212.1081v3.

\bibitem{DSt3} { A. Dimca, G. Sticlaru}, {\em On the syzygies and Alexander polynomials of nodal hypersurfaces},  {Math. Nachr.} \textbf{285}(2012), 2120--2128.


\bibitem{DSt} A. Dimca, G. Sticlaru, {\em Koszul complexes and pole order filtrations, } arXiv:1108.3976, to appear in Proc. Edinburgh Math. Soc.

\bibitem{DS14}
A. Dimca, E. Sernesi:  {\em Syzygies and logarithmic vector fields along plane curves,} arXiv:1401.6838


\bibitem{EGH} D.\ Eisenbud, M.\ Green, J.\ Harris, {\em Cayley-Bacharach theorems and conjectures}, Bull. AMS, {\bf 33}, (1996), 295-324.

\bibitem{EM} P. Eissydieux, D. Megy, {\em Sur l'application des p\'eriodes d'une variation de structure de
Hodge attach\'ee aux familles de hypersurfaces \`a singularit\'es simples}. arXiv:1305.3780

\bibitem{HMNW} T. Harima, J. Migliore, U. Nagel, J. Watanabe, {\em The weak and strong Lefschetz properties for artinian $K$-algebras}, J. Algebra {\bf 262}, (2003), 99-126.

\bibitem{HP} J. Herzog, D. Popescu, {\em The strong Lefschetz property and simple extensions}, arXiv:AC/0506537.

\bibitem{P} D. Popescu, {\em The strong Lefschetz property and certain complete intersection extensions},  Bull. Math. Soc. Sci. Math. Roumanie, {\bf 48(96)}, 2005, 421-431.

\bibitem{Se} E. Sernesi,  {\em The local cohomology of the jacobian ring,} arXiv:1306.3736.

\bibitem{ST} A. Simis, S.O. Tohaneanu:  {\em Homology of homogeneous divisors.} arXiv:1207.5862

\bibitem{St} G. Sticlaru, Log-concavity of Milnor algebras for projective hypersurfaces, Preprint
arXiv:1310.0506v2.

\bibitem{vS} D. van Straten,  {\em On the Betti numbers of the Milnor fiber of a certain class of hypersurface singularities, } in: Singularities, Representations of Algebras and Vector Bundles (Lambrecht1985), Lecture Notes in Math., vol. 1273, Springer, Berlin and New York, 1987, pp. 203-220.


\bibitem{SW}   D. van Straten, T. Warmt: {\em Gorenstein-duality for one-dimensional almost complete intersections - with an application to non-isolated real singularities.} arXiv:1104.3070

  \end{thebibliography}
\end{document}